\theoremstyle{definition}
\newtheorem{de}{Definition}[section]
\newtheorem{ex}[de]{Example}
\theoremstyle{plain}
\newtheorem{teo}[de]{Theorem}
\newtheorem{prop}[de]{Proposition}
\newtheorem{cor}[de]{Corollary}
\theoremstyle{remark}
\newtheorem{oss}[de]{Remark}
\newcommand{\R}{\mathbb{R}}
\newcommand{\p}{\mathbb{P}}
\newcommand{\E}{\mathbb{E}}
\newcommand{\C}{\mathcal{C}}
\newcommand{\F}{\mathcal{F}}
\renewcommand{\epsilon}{\varepsilon}
\newcommand{\veps}{\varepsilon}
\newcommand{\ang}[1]{\left< #1 \right>}
\title[Functional Cramer-Rao bounds in Sobolev spaces]{Functional Cramer-Rao bounds and Stein estimators in Sobolev spaces, for Brownian motion and Cox processes}
\author{Eni Musta }
\address{Delft University of Technology}
\email{e.musta@tudelft.nl}
 \author{Maurizio Pratelli}
   \address{Universit\`a degli Studi di Pisa} 
   \email{pratelli@dm.unipi.it}
   \author{Dario Trevisan}
   \address{Universit\`a degli Studi di Pisa} 
   \email{dario.trevisan@unipi.it}
\thanks{The second and third authors are members of the GNAMPA group of the Istituto  Nazionale di Alta Matematica (INdAM)}
\begin{document}

\begin{abstract}
We investigate the problems of drift estimation for a shifted Brownian motion and intensity estimation for a Cox process on a finite interval $[0,T]$, when the risk is given by the energy functional associated to some fractional Sobolev space $H^1_0\subset W^{\alpha,2}\subset L^2$. In both situations, Cramer-Rao lower bounds are obtained, entailing in particular that no unbiased estimators with finite risk in $H^1_0$ exist. By Malliavin calculus techniques, we also study super-efficient Stein type estimators (in the Gaussian case).
\end{abstract}

\maketitle 
\section{Introduction}

In this paper we focus on two problems of non-parametric (or, more rigorously, infinite-dimensional parametric) statistical estimation: drift estimation for a shifted Brownian motion and intensity estimation for a Cox process, on a finite time interval $[0,T]$. Our investigation stems from the articles \cite{MR2458197, MR2486115} where N.\ Privault and A.\ R{\'e}veillac developed an original approach to these problems, by employing techniques from Malliavin calculus and the so-called \emph{Stein's method} \cite{MR0133191} to study Cramer-Rao bounds and super-efficient ``shrinkage'' estimators in these infinite-dimensional frameworks. Such a combination of these two powerful techniques fits into a more general picture, which only in the recent years has become clear (see the monograph \cite{nourdin-peccati}) and is currently a very active research area, with impact on statistics (see e.g.\ \cite{MR1873834, MR2857018, MR2870511, MR3089887}) and, more generally, on probabilistic approximations.

As in \cite{MR2458197, MR2486115}, we assume that the unknown function to be estimated belongs to the Hilbert space $H^1_0(0,T)$ (which is a reasonable choice, at least in the case of shifted Brownian motion, because of Cameron-Martin and Girsanov theorems) but we move further by addressing the following question, which is rather natural but apparently was not considered: what about estimators which also take values in $H^1_0$? Indeed, in \cite{MR2458197, MR2486115}, estimators are seen as functions with values in $L^2( [0,T], \mu)$ (where $\mu$ is any finite measure) or, equivalently, the associated risk is computed with respect to the $L^2$ norm and not the (stronger) $H^1_0$ norm.

To investigate this problem, we first provide Cramer-Rao bounds with respect to different risks, by considering the estimation in the interpolating fractional Sobolev space $H^1_0\subset W^{\alpha,2}\subset L^2$, for $\alpha \in [0,1]$. It turns out that no unbiased estimator exist in $H^1_0$ (Theorem \ref{teo:cr-h1-bm}) and even in $W^{\alpha,2}$, for $\alpha\geq 1/2$ (Theorem \ref{prop:cramer-rao-wiener-fractional-hilbert}). Although a bit surprising, these results reconcile with the following intuition: since the estimator is a function of the realization of the process, whose paths also do not belong to $H^1_0$ (nor $W^{\alpha,2}$, for $\alpha\geq 1/2$), it is ``too risky'' to estimate (without bias) the parameter in that scale of regularity. Therefore, besides answering a rather natural question, our results highlight the delicate role played by the choice of different norms in such estimation problems, and one might expect that similar phenomena might appear in other situations, technically more demanding (e.g.\ SDE's).

As a second task, we study super-efficient ``shrinkage'' estimators in the spaces $W^{\alpha, 2}$. It is often intuitively suggested the ideal situation for the problem of estimation would be to have an unbiased estimator with low variance, but allowing for a little bias may entail existence of estimators with lower risks, in many situations: this is the purpose of \emph{Stein's method}, and we rely on its extension and combination with Malliavin calculus to these frameworks developed in \cite{MR2458197, MR2486115}. With a similar approach, we give sufficient conditions for super-efficient estimators in $W^{\alpha, 2}$, for $\alpha < 1/2$, and we give explicit examples of such estimators, in the case of Brownian motion (Example \ref{ex:stein-bm}). In the case of Cox processes, although it is possible to define a suitable version of Malliavin calculus and provide as well sufficient conditions for Stein estimators, we are currently unable to provide explicit examples.

The paper is organized as follows. In Section \ref{sec:drift-bm} we deal with drift estimation for a shifted Brownian motion, addressing Cramer-Rao lower bounds with respect to risks computed in $H^1_0$ and fractional Sobolev spaces. Analogous results on intensity estimators for Cox processes are given in Section \ref{sec:cox}. In Section \ref{sec:malliavin}, we recall notation and results for Malliavin calculus on the Wiener space. Finally, in Section \ref{sec:stein}, we discuss super-efficient estimators.

\section{Drift estimation for a shifted Brownian motion}\label{sec:drift-bm}

In this section, we fix $T \ge 0$ and let $X = (X_t)_{t \in [0,T]}$ be a Brownian motion (on the finite interval $[0,T]$), defined on some filtered probability space $(\Omega,\F,(\F_t)_{t \in [0,T]},\p)$. As a (infinite-dimensional) space of parameters $\Theta$, we consider a set of absolutely continuous, adapted processes $u_t :=\int_0^t\dot{u}_s\,ds$ (for $t \in [0,T]$) such that $(\dot{u}_t)_{t \in [0,T]}$ satisfies the conditions of Girsanov theorem: indeed, for $u \in \Theta$, we define the probability $\p^u:=L^u\p$, with
\[
L^u:=\exp\Big[\,\int_0^T \dot{u}_s\,dX_s-\frac{1}{2}\int_0^T \dot{u}^2_s\,ds\,\Big],
\]
and Girsanov theorem entails that, with respect to the probability $\p^u$, the process $X^u_t :=X_t-u_t$ is a Brownian motion on $[0,T]$.

We address the problem of estimating the drift w.r.t.\ $\p^u$ on the basis of a single observation of $X$. This is of interest in different fields of applications: for example, we can interpret $X$ as the observed output signal of some unknown input signal $u$, perturbed by a Brownian noise. Such a problem is investigated e.g.\ in \cite{MR2458197}, where the following definition is given.  

\begin{de}
Any measurable stochastic process $\xi:\Omega\times[0,T]\to \R$ is called an estimator of the drift $u$. An estimator of the drift $u$ is said to be unbiased if, for every $u \in \Theta$, $t \in [0,T]$, $\xi_t$ is $\p^u$-integrable and it holds $\E^u[\,\xi_t\,]=\E^u[\,u_t\,]$.
\end{de}

In this section, we forgo to specify ``of the drift $u$'' and we simply refer to estimators. Moreover, we refer to the quantity $\E^u[\,\xi_t-u_t\,]$ as the bias of the estimator $\xi$ (whenever it is well-defined).

By introducing as a risk associated to any estimator $\xi$, the quantity
\begin{equation}\label{eq:risk-l2}
\E^u[\,\Vert \xi-u\Vert^2_{L^2(\mu)}\,]=\E^u\Big[\,\int_0^T \vert \xi_t-u_t\vert^2\,\mu(dt)\,\Big],
\end{equation}
where $\mu$ is any finite Borel measure on $[0,T]$, Privault and R\'{e}veillac  provide the following Cramer-Rao lower bound for adapted and unbiased estimators \cite[Proposition 2.1]{MR2458197}, $\Theta$ being the space of all absolutely continuous, adapted processes, whose derivatives satisfy the conditions of Girsanov theorem.

\begin{teo}[Cramer-Rao inequality in $L^2(\mu)$] For any adapted and unbiased estimator $\xi$ it holds
\begin{equation}\label{eq:cramer-rao-pr}
\E^u[\,\Vert \xi-u\Vert^2_{L^2(\mu)}\,] \geq \int_0^T t\,\mu(dt), \quad \text{for every $u \in \Theta$.}
\end{equation}
Equality is attained by the (efficient) estimator $\hat{u}=X.$
\end{teo}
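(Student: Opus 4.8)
The plan is to reduce the statement to a pointwise-in-$t$ estimate, $\E^u[\,(\xi_t-u_t)^2\,]\ge t$ for every $t\in[0,T]$, and then integrate against $\mu$. The pointwise estimate will follow from Cauchy--Schwarz once we establish the Fisher-information--type identity
\[
\E^u\big[\,(\xi_t-u_t)\,X^u_t\,\big]=t,\qquad\text{for every } u\in\Theta,\ t\in[0,T].
\]
Indeed, since $X^u_t$ is centered Gaussian with variance $t$ under $\p^u$, this identity together with Cauchy--Schwarz gives $t\le\big(\E^u[(\xi_t-u_t)^2]\big)^{1/2}\sqrt t$, hence $\E^u[(\xi_t-u_t)^2]\ge t$; integrating in $\mu(dt)$ (using joint measurability of $\xi$ so that the integral is meaningful) yields \eqref{eq:cramer-rao-pr}. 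For the efficiency claim, $\hat u=X$ is adapted, it is unbiased because $\E^u[X_t]=\E^u[X^u_t]+\E^u[u_t]=\E^u[u_t]$, and it attains
\[
\E^u\big[\,\Vert X-u\Vert^2_{L^2(\mu)}\,\big]=\int_0^T\E^u\big[(X^u_t)^2\big]\,\mu(dt)=\int_0^T t\,\mu(dt),
\]
so the bound is sharp (consistently, $\hat u_t-u_t=X^u_t$ realizes equality in Cauchy--Schwarz).

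To prove the identity, fix $u\in\Theta$ and $t\in[0,T]$ and perturb the parameter in the Cameron--Martin direction $h$ given by $\dot h_s:=\mathbf{1}_{[0,t]}(s)$, so that $h_t=t$ and $u+\epsilon h\in\Theta$ for every $\epsilon\in\R$. Writing the ratio of Girsanov densities in terms of the $\p^u$-Brownian motion $X^u$,
\[
\frac{L^{u+\epsilon h}}{L^u}=\exp\!\Big(\epsilon\int_0^T\dot h_s\,dX^u_s-\frac{\epsilon^2}{2}\int_0^T\dot h_s^2\,ds\Big)=\exp\!\Big(\epsilon X^u_t-\frac{\epsilon^2}{2}\,t\Big),
\]
and using $\E^u[\exp(\epsilon X^u_t-\epsilon^2 t/2)]=1$, the unbiasedness of $\xi$ at the parameter $u+\epsilon h$, rewritten under $\p^u$, becomes
\[
\E^u\!\Big[\,(\xi_t-u_t)\,\exp\!\Big(\epsilon X^u_t-\frac{\epsilon^2}{2}\,t\Big)\Big]=\epsilon\,h_t=\epsilon\,t,\qquad\epsilon\in\R.
\]
Differentiating this identity in $\epsilon$ at $\epsilon=0$ (the $\E^u$ of $u_t$ against the exponential disappears to first order together with the constraint value, leaving the stated identity) produces exactly $\E^u[(\xi_t-u_t)\,X^u_t]=t$.

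The only delicate point is the legitimacy of differentiating under the expectation. This rests on the integrability built into the hypotheses: since $\xi$ is unbiased, $\xi_t$ is $\p^v$-integrable for every $v\in\Theta$, in particular $\E^u[\,|\xi_t|\,e^{\epsilon X^u_t}\,]<\infty$ for all $\epsilon\in\R$ (and the same for $u_t$ in place of $\xi_t$, by the Girsanov-type conditions defining $\Theta$); hence $\epsilon\mapsto\E^u[(\xi_t-u_t)\,e^{\epsilon X^u_t}]$ extends to an entire function of $\epsilon$ and term-by-term differentiation is justified. I expect this integrability bookkeeping to be the main, though essentially routine, obstacle: conceptually the argument is just the classical Cramer--Rao inequality, with the one-parameter exponential family generated by the perturbation $h$ playing the role of the statistical model and $X^u_t$ the role of the (efficient) score.
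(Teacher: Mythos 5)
Your proposal is correct and follows essentially the same route as the paper: perturbing the parameter along the deterministic Cameron--Martin direction $\dot h = 1_{[0,t]}$, differentiating the unbiasedness identity at $\epsilon=0$ via the Girsanov density ratio to get the score identity $\E^u[(\xi_t-u_t)X^u_t]=t$ (which is exactly \eqref{eq:identity-cramer-rao-bm} with $\dot v = 1_{[0,t]}$), then Cauchy--Schwarz, integration against $\mu$, and the check that $\xi=X$ attains equality. The only cosmetic differences are that you specialize the direction from the outset and justify the interchange of differentiation and expectation via exponential moment bounds rather than the paper's remark that one may assume the risk finite.
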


Before giving our results, let us briefly comment on some aspects of this inequality and its proof, in particular with respect to adaptedness of $\xi$ and the role played by the exponent $2$.

By direct inspection of the proof in \cite{MR2458197}, the requirement for $\xi$ to be adapted is seen to be unnecessary. Indeed, the argument relies on an application of Cauchy-Schwarz inequality in the right hand side of the identity
\begin{equation}
\label{eq:identity-cramer-rao-bm}
v(t) = \E^u\Big[\,(\xi_t-u_t)\,\int_0^T\dot{v}(s)\,dX^u_s\,\Big], \quad \text{for $t \in [0,T]$,}
\end{equation}
valid for every \emph{deterministic} process $v \in \Theta$ (thus, $v(t) := \int_0^t \dot{v}(s)\,ds$) and then choosing $\dot{v}(s) = 1_{[0,t]}(s)$. In turn, the proof of \eqref{eq:identity-cramer-rao-bm} uses fact that, for every $\veps \in \R$, it holds $u+\epsilon\,v \in \Theta$, thus
\[
\E^{u+\epsilon\,v}[\,\xi_t\,] =\E^{u+\epsilon\,v}[\,u_t+\epsilon\,v(t)\,]
 =\E^{u+\epsilon\,v}[\,u_t\,]+\epsilon\,v(t), \quad \text{ for $t\in[0,T]$.}
\]
and differentiates with respect to $\veps$ at $\epsilon =0$ (exchanging between differentiation and expectation is justified by the finitness of the left hand side in \eqref{eq:cramer-rao-pr}, otherwise there is nothing to prove):
\begin{equation*}\begin{split}
\frac{d}{d\epsilon} \Big|_{\epsilon=0}\E^{u+\epsilon\,v}[\,\xi_t-u_t\,] &= \E\Big[\,(\xi_t-u_t)\,\frac{d}{d\epsilon}\Big|_{\epsilon=0} L_T^{u+\epsilon\,v}\,\Big] \\ & = 
 \E^u\Big[\,(\xi_t-u_t)\,\int_0^T\dot{v}(s)\,dX^u_s\,\Big].
\end{split}
\end{equation*}

Let us also notice that it is not necessary for $\Theta$ to be the whole set of drifts $u$ such that Girsanov theorem applies to $\dot{u}$, and the following condition is sufficient: for every $u \in \Theta$ and deterministic $v \in \Theta$, it holds $u+ v \in \Theta$.

\begin{oss}\label{oss:adapted}
Back to the problem of adaptedness of $\xi$, it would be desirable to argue that general (not-necessarily adapted) estimators can not perform better than adapted ones, and the following argument might seem to go in that direction, but does not allow us to conclude. Let $\xi$ be any unbiased estimator and for $u \in \Theta$, consider the optional projection $\eta$ of $\xi$, with respect to the probability $\p^u$, so that $\eta_t:=\E^u[\,\xi_t\,|\,\F_t\,]$, for $t \in [0,T]$. Then, $\E^u [\,\eta_t\,] = u_t$ and it holds
\[
\E^u[\,|\eta_t-u_t|^2\,]=\E^u\big[\,\E^u[\xi_t-u_t\,|\,\F_t\,]^2\,\big]\leq \E^u[\,|\,\xi_t-u_t\,|^2\,].
\]
However, this does not entail that $\eta$ performs better that $\xi$, since $\eta = \eta^u$ depends also on $u$, thus it is not an estimator. On the other side, if we keep $\bar{u} \in \Theta$ fixed, then $\eta^{\bar u} $ could be biased, i.e.\ $\E^u[\,\eta_t^{\bar{u}}\,]\neq \E^u[\,u_t\,]$ for some $u \in \Theta$, $t \in [0,T]$.
\end{oss}

\begin{oss}
\label{oss:lp}
Similarly to the mean squared error, one can consider the risk defined by $L^p$ norms, for $p \in (1,\infty)$:
\[
\int_0^T\E^u[\,|\,\xi_t-u_t\,|^p\,]\,\mu(dt).
\]
Again, by direct inspection of the proof in  \cite{MR2458197}, applying H\"{o}lder inequality (with conjugate exponents $(p,q)$) instead of Cauchy-Schwarz inequality in \eqref{eq:identity-cramer-rao-bm}, we obtain an inequality of the form
\[
\E^u[\,|\xi_t-u_t|^p\,]\geq \frac{|v(t)|^p}{c_q^{p/q}\,\Big(\int_0^t\dot{v}^2(s)\,ds\Big)^{p/2}}\geq \frac{1}{c_q^{p/q}}\,t^{p/2}, \quad \text{for $t \in [0,T]$,}
\]
where $c_q:=\E[\,|Y|^q\,]$ is the $q$-th moment of a $N(0,1)$ random variable $Y$. Integration with respect to $\mu$ then provides a Cramer-Rao type lower bound. However, letting $\xi = X$, one has
\[
\E^u[\,|X_t-u_t|^p\,]=\E^u[\,|X^u_t|^p\,]=c_p\,t^{p/2},\quad \text{for $t \in [0,T]$,}
\]
thus $X$ is not an efficient estimator in $L^p(\Omega\times[0,T])$ for $p\neq 2$.
\end{oss}

In all what follows, we let $H^1_0 (= H^1_0(0,T))$ be the space of (continuous) functions in the form $h(t)=\int_0^t\dot{h}(s)\,ds$, for $t \in [0,T]$, with $\dot{h}\in L^2(0,T)$ (usually called, in this context, the Cameron-Martin space), and we assume that, for every $u \in \Theta$, $h \in H^1_0$, it holds $u+h \in \Theta$. The $H^1_0$ ``energy'' functional, namely $\|h\|_{H^1_0} := \|\dot{h}\|_{L^2(0,T)}$ provides a Hilbert norm on $H^1_0$. For simplicity of notation, we extend such a functional identically to $+\infty$ for any Borel curve $h: [0,T] \to \R$ which do not belong to $H^1_0$.

We notice that $H^1_0$ is included in $\C^{1/2}(0,T)$, the space of $1/2$-H\"{o}lder continuous functions: since the paths of the Brownian motion are not in $1/2$-H\"{o}lder continuous, we deduce that the process $X$ is not $H^1_0$-valued (negligibility of the Cameron-Martin space holds true also for abstract, infinite-dimensional, Wiener spaces). However, since the drift $u$ takes values in $H^1_0$, it is natural to look for an estimator $\xi$ sharing this property. Our first result shows that, if we require $\xi$ to be unbiased, this is not possible, i.e.\ such an estimator $\xi$ has necessarily infinite $H^1_0$ risk.

\begin{teo}[Estimators in $H^1_0$]
\label{teo:cr-h1-bm}
Let $\xi$ be an estimator such that, for some $u \in \Theta$, it holds
\[
\E^u[\,\| \xi-u\|^2_{H^1_0} \,]<\infty.
\] 
Then, $\xi$ is not unbiased.
\end{teo}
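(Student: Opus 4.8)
The plan is to argue by contradiction. Assume that $\xi$ is unbiased and that $R := \E^u[\,\|\xi - u\|_{H^1_0}^2\,] < \infty$ for some $u \in \Theta$, and derive a contradiction. First, record two consequences of finiteness of $R$. Since, by convention, $\|\cdot\|_{H^1_0}$ equals $+\infty$ off $H^1_0$, finiteness of $R$ forces $\xi - u \in H^1_0$ for $\p^u$-almost every $\omega$; in particular $t \mapsto \xi_t(\omega) - u_t(\omega)$ is then absolutely continuous with an $L^2(0,T)$ derivative, which I denote $\overline{\xi}_s(\omega) := \frac{d}{ds}(\xi_s(\omega) - u_s(\omega))$ (defined for a.e.\ $s$, for a.e.\ $\omega$; joint measurability in $(s,\omega)$ follows from that of $\xi$ by passing to difference quotients). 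Moreover, from $|h(t)| = |\int_0^t \dot h(r)\,dr| \le \sqrt{T}\,\|h\|_{H^1_0}$ for $h \in H^1_0$ we get $\sup_{t\in[0,T]}\E^u[(\xi_t - u_t)^2] \le T R < \infty$, so the $L^2(\mu)$-risk of $\xi$ is finite for every finite Borel measure $\mu$, and hence the Cramer--Rao identity \eqref{eq:identity-cramer-rao-bm} is available for every deterministic test function $v \in H^1_0$ (recall $u + \veps v \in \Theta$ for all $\veps$, by the standing assumption on $\Theta$).

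Next, fix an orthonormal basis $(e_n)_{n\ge 1}$ of $H^1_0$ and set $G_n := \int_0^T \dot e_n(s)\,dX^u_s$. Under $\p^u$ the process $X^u$ is a Brownian motion, so by the It\^o isometry $\E^u[G_n^2] = \|\dot e_n\|_{L^2(0,T)}^2 = 1$. The heart of the matter is the identity
\begin{equation*}
\E^u\big[\,\langle \xi - u,\, e_n\rangle_{H^1_0}\,G_n\,\big] = \|e_n\|_{H^1_0}^2 = 1, \qquad \text{for every } n \ge 1.
\end{equation*}
To see this, write $\langle \xi - u, e_n\rangle_{H^1_0} = \int_0^T \overline{\xi}_s\,\dot e_n(s)\,ds$. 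Applying \eqref{eq:identity-cramer-rao-bm} with $v = e_n$ gives $e_n(t) = \E^u[(\xi_t - u_t)\,G_n]$ for all $t \in [0,T]$, that is $\int_0^t \dot e_n(s)\,ds = \int_0^t \E^u[\overline{\xi}_s\,G_n]\,ds$, where the exchange of $\int_0^t ds$ and $\E^u$ is justified by $\E^u[\,|G_n|\int_0^t |\overline{\xi}_s|\,ds\,] \le \sqrt{t}\,\E^u[\,|G_n|\,\|\xi - u\|_{H^1_0}\,] \le \sqrt{t}\,R^{1/2} < \infty$ (Cauchy--Schwarz, applied first in $s$ and then in $\omega$). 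Since this holds for every $t$, we get $\E^u[\overline{\xi}_s\,G_n] = \dot e_n(s)$ for a.e.\ $s \in [0,T]$, and then, exchanging $\int_0^T ds$ and $\E^u$ once more in the same way, $\E^u[\,\langle \xi - u, e_n\rangle_{H^1_0}\,G_n\,] = \int_0^T \dot e_n(s)\,\E^u[\overline{\xi}_s\,G_n]\,ds = \int_0^T \dot e_n(s)^2\,ds = 1$.

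Finally, by Cauchy--Schwarz and $\E^u[G_n^2] = 1$ this gives $\E^u[\,\langle \xi - u, e_n\rangle_{H^1_0}^2\,] \ge 1$ for every $n$, so by Parseval's identity and monotone convergence
\begin{equation*}
R = \E^u\big[\,\|\xi - u\|_{H^1_0}^2\,\big] = \sum_{n\ge 1} \E^u\big[\,\langle \xi - u, e_n\rangle_{H^1_0}^2\,\big] \ge \sum_{n\ge 1} 1 = +\infty,
\end{equation*}
contradicting $R < \infty$. I expect the only genuinely delicate steps to be the two Fubini exchanges above --- i.e.\ verifying that the integrability supplied by finiteness of $R$ (via the bound $\sup_t|h(t)| \le \sqrt{T}\,\|h\|_{H^1_0}$ together with Cauchy--Schwarz) really does let one differentiate the Cramer--Rao identity in $t$ and pull $G_n$ inside the $H^1_0$ inner product --- together with the routine measurability of $(s,\omega) \mapsto \overline{\xi}_s(\omega)$; the reproducing-kernel/Parseval bookkeeping and the concluding divergence are then soft.
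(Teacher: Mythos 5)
Your proof is correct, and after the common starting point it diverges from the paper's argument in how the contradiction is extracted. Both proofs rest on the identity \eqref{eq:identity-cramer-rao-bm} and its derivative in $t$ (your relation $\E^u[\,\overline{\xi}_s\,G_n\,]=\dot e_n(s)$ is exactly the paper's differentiated identity with $v=e_n$, and your Fubini justifications of the two exchanges are sound, given the convention that finite $H^1_0$-risk forces $\xi-u\in H^1_0$ a.s.\ and the bound $\sup_t|h(t)|\le\sqrt{T}\,\|h\|_{H^1_0}$). The paper then applies Cauchy--Schwarz pointwise in $t$ to obtain \eqref{eq:inequality-h1-useful-crbm} and concludes with a single localized test function: it picks a non-negligible set $A$ on which $\int_A\E^u[\,|\dot\xi_t-\dot u_t|^2\,]\,dt<1$ and takes $\dot v=1_A$. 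You instead integrate the differentiated identity against $\dot e_n$, apply Cauchy--Schwarz at the level of each basis element to get $\E^u[\,\langle\xi-u,e_n\rangle_{H^1_0}^2\,]\ge 1$, and sum over an orthonormal basis via Parseval, so the risk dominates an infinite ``trace''. Your route makes the infinite-dimensional Cramer--Rao interpretation explicit (variance at least $1$ in every one of infinitely many orthonormal directions of the Cameron--Martin space), which is conceptually illuminating; the paper's route is more elementary (no basis, no Parseval), and its intermediate inequality \eqref{eq:inequality-h1-useful-crbm} is reused verbatim to prove the subsequent corollary allowing a sufficiently regular bias, a by-product your formulation would need to be reworked slightly to deliver.
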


Before we address the proof for general, possibly non-adapted, estimators, we give the following argument that exploits Ito formula: actually it is longer, but we feel that it is more of stochastic flavor.

\begin{proof}(Case of adapted estimators.)
Let us assume, by contradiction, that $\xi$ is unbiased, thus by difference, $\xi \in L^2( \Omega, \p^u; H^1_0)$. For every (deterministic) $v \in H^1_0$, arguing as above for the deduction of \eqref{eq:identity-cramer-rao-bm}, we obtain that
\[
v(t)=\E^u\Big[\,\int_0^t(\dot{\xi}_s-\dot{u}_s)\,ds\,\int_0^t\dot{v}(s)\,dX^u_s\,\Big], \quad \text{for $t \in [0,T]$,}
\]
where stochastic integration reduces to the interval $[0,t]$ because of the adaptedness assumption. Integrating by parts (i.e., using Ito's formula) we rewrite the random variable above as
\[
\int_0^t\Big(\int_0^s\dot{v}(r)\,dX^u_r\Big)\,(\dot{\xi}_s-\dot{u}_s)\,ds+\int_0^t\Big(\int_0^s(\dot{\xi}_r-\dot{u}_r)\,dr\Big)\,\dot{v}(s)\,dX^u_s
\]
obtaining the right analogue of \eqref{eq:identity-cramer-rao-bm} for the study of $H^1_0$ energy:
\[
v(t)=\E^u\Big[\,\int_0^t\Big(\int_0^s\dot{v}(r)\,dX^u_r\Big)\,(\dot{\xi}_s-\dot{u}_s)\,ds \,\Big], \quad \text{for $t \in [0,T]$.}
\]
Indeed, Cauchy-Schwarz inequality and Ito's isometry give
\begin{align*}
v(t)^2&\leq \E^u\Big[\,\int_0^t \big(\int_0^s\dot{v}(r)\,dX^u_r\big)^2\,ds \,\Big]\,\E^u\Big[\,\int_0^t (\dot{\xi}_s-\dot{u}_s)^2\,ds \,\Big]\\
&=\int_0^t\Big(\int_0^s\dot{v}^2(r)\,dr\Big)\,ds \,\int_0^t\E^u[\,(\dot{\xi}_s-\dot{u}_s)^2\,]\,ds\\
&= \int_0^t (t-s) \dot{v}^2(s) ds \int_0^t\E^u[\,(\dot{\xi}_s-\dot{u}_s)^2\,]\,ds.
\end{align*}
In particular, choosing $t = T$, we deduce
\[
\E^u[\,\| \xi-u\|^2_{H^1_0} \,] \geq \frac{v(T)^2}{\int_0^T(T-t) \dot{v}^2(t)\,dt}.
\]
To obtain a contradiction, it is enough to prove that for every constant $c>0$, there exists $\dot{v}\in L^2(0,T)$ such that the left hand side above is greater than $c$, i.e.,
\begin{equation}
\label{eqn:inf}
\left(\int_0^T \dot{v}(t)\,dt\right)^2\geq c\,\int_0^T(T-t)\,\dot{v}(t)^2\,dt.
\end{equation}
Indeed, if we let $\dot{v}(t)=\frac{1}{(T-t)^\alpha}$ for some $0<\alpha<1$, it holds
\[
\left(\int_0^T \dot{v}(t)\,dt\right)^2=\left(\frac{T^{1-\alpha}}{1-\alpha}\right)^2\quad\text{and}\quad \int_0^T(T-t)\,\dot{v}^2(t)\,dt=\frac{T^{2(1-\alpha)}}{2(1-\alpha)}.
\]
It is then sufficient to let $\alpha \uparrow 1$ to conclude. 
\end{proof}

\begin{oss}
Instead of the explicit construction of $v \in H^1_0$ above, to obtain a contradiction we can also use the following duality result. On a measure space $(E,\mathcal{E},\mu)$, if $g \ge 0$ is a measurable function such that, for some constant $c>0$, it holds 
\[
\int_E f\,g\,d\mu\leq c\,\left(\int_E f^2\,d\mu\right)^{1/2},\quad \text{for every $f\in L^\infty(\mu)$, $f\geq 0$,}
\]
then it holds $g\in L^2(\mu)$ with $\|
 g\|_{L^2(\mu)}\le c$. The easy proof follows from considering the continuous, linear functional $\phi$ initially defined on $L^\infty\cap L^2(\mu)$ by $f \mapsto \int_E f\,g\,d\mu$ and then apply Riesz theorem on its extension to $L^2(\mu)$.

In the proof above, a contradiction immediately follows from \eqref{eqn:inf}, letting $\mu(dt)=(T-t)\,dt$ and $g(t)=(T-t)^{-1}$.
\end{oss}

We now provide a complete proof of Theorem \ref{teo:cr-h1-bm}.

\begin{proof}(General case.)
Arguing by contradiction, we let $\xi\in L^2( \Omega, \p^u; H^1_0)$. For every (deterministic) $v \in H^1_0$, arguing as above for the deduction of \eqref{eq:identity-cramer-rao-bm}, we obtain instead
\[v(t)=\E^u\Big[\,\int_0^t(\dot{\xi}_s-\dot{u}_s)\,ds\,\int_0^T\dot{v}(s)\,dX^u_s\,\Big], \quad \text{for $t \in [0,T]$.}\]
Then, we differentiate with respect to $t \in [0,T]$ (exchanging derivatives and expectation is ensured by the finite risk assumption), and we obtain, for a.e.\ $t \in [0,T]$, 
\[ \dot{v}(t) = \E^u\Big[\, (\dot{\xi}_t-\dot{u}_t) \,\int_0^T\dot{v}(s)\,dX^u_s\,\Big],\]
At this stage, Cauchy-Schwarz inequality and Ito isometry yield
\begin{equation}\label{eq:inequality-h1-useful-crbm}|\dot{v}(t)|^2 \le  \E^u\Big[\, |\dot{\xi}_t-\dot{u}_t|^2 \,\,\Big]  \int_0^T |\dot{v}(s)|^2 ds,\quad \text{for a.e.\ $t \in [0,T]$,}\end{equation}
From this inequality, we easily obtain a contradiction, arguing as follows. Let $A \subseteq [0,T]$ be a non-negligible Borel subset such that $\int_A \E^u[ |\dot{\xi}_t-\dot{u}_t|^2]dt <1$, which exists because of the finite risk assumption and uniform integrability (notice that $A$ does not depend upon $v$). Then, integrating the above inequality for $t \in A$, we obtain
\[ \int_A |\dot{v}(t)|^2 dt \le \int_A \E^u\Big[\, |\dot{\xi}_t-\dot{u}_t|^2 \,\,\Big] dt \int_0^T |\dot{v}(t)|^2 dt,\]
for every $\dot{v} \in L^2(0,T)$, in particular for every $\dot{v} \in L^2( A)$. Simply taking $\dot{v} = 1_A$, we obtain the required contradiction.
\end{proof}

Actually, the result on the absence of unbiased estimators in $H^1_0$ can be slightly strengthened, allowing for estimator whose bias is sufficiently regular. We state it as a corollary (of the proof), remarking that similar deductions could be performed also in the cases that we consider below.

\begin{cor}
Let $\xi$ be an estimator such that, for every $u \in \Theta$, $t \in [0,T]$, $\xi_t$ is $\p^u$-integrable, and it holds, for some $C = (C_t)_{t\in [0,T]} \in L^2(0,T)$ (possibly depending upon $u \in \Theta$),
\[
\left| \frac{d}{dt} \frac{d}{d\epsilon}\Big|_{\epsilon=0} \E_{u+ \veps v }[\,\xi_t-u_t\,] \right | \leq C_t \|v\|_{H^1_0},\quad\text{a.e.\ $t \in [0,T]$, for every $v \in H^1_0$.}
\]
Then, the $H^1_0$ risk of the estimator $\xi$ is infinite, i.e.\
\[
\E^u[\, \| \xi - u \|^2_{H^1_0} \,]\,ds=\infty, \quad \text{ for every $u \in \Theta$.}
\] 
\end{cor}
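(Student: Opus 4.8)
The plan is to repeat the general-case proof of Theorem~\ref{teo:cr-h1-bm}, now carrying the bias along as an additive error term in the key inequality \eqref{eq:inequality-h1-useful-crbm}. Arguing by contradiction, fix $u\in\Theta$ with $\E^u[\|\xi-u\|^2_{H^1_0}]<\infty$; then $\dot\xi-\dot u\in L^2(\Omega\times[0,T],\p^u\otimes dt)$ and, by Cauchy--Schwarz in the time variable, $\xi_t-u_t\in L^2(\Omega,\p^u)$ for every $t$. Exactly as in the derivation of \eqref{eq:identity-cramer-rao-bm} (the exchange of differentiation and expectation being licensed by finiteness of the risk), for $($deterministic$)$ $v\in H^1_0$ one has
\[
\frac{d}{d\epsilon}\Big|_{\epsilon=0}\E^{u+\epsilon v}[\xi_t-u_t]=\E^u\Big[(\xi_t-u_t)\int_0^T\dot v(s)\,dX^u_s\Big],
\]
and, writing $\xi_t-u_t=\int_0^t(\dot\xi_s-\dot u_s)\,ds$ and using Fubini (legitimate since $\dot\xi-\dot u\in L^2$ and $\int_0^T\dot v\,dX^u\in L^2(\p^u)$), one differentiates in $t$ to get, for a.e.\ $t$,
\[
\frac{d}{dt}\frac{d}{d\epsilon}\Big|_{\epsilon=0}\E^{u+\epsilon v}[\xi_t-u_t]=\E^u\Big[(\dot\xi_t-\dot u_t)\int_0^T\dot v(s)\,dX^u_s\Big].
\]

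Since $v$ is deterministic, the bias of $\xi$ at the parameter $u+\epsilon v$ equals $\E^{u+\epsilon v}[\xi_t-u_t]-\epsilon\,v(t)$, so the quantity controlled by the hypothesis is the right-hand side of the last display minus $\dot v(t)$; thus the assumption reads, for every $($deterministic$)$ $v\in H^1_0$ and a.e.\ $t$,
\[
\Big|\,\dot v(t)-\E^u\Big[(\dot\xi_t-\dot u_t)\int_0^T\dot v(s)\,dX^u_s\Big]\,\Big|\le C_t\,\|v\|_{H^1_0}.
\]
Combining this with Cauchy--Schwarz and It\^o's isometry, which give $\big|\E^u[(\dot\xi_t-\dot u_t)\int_0^T\dot v\,dX^u]\big|\le\E^u[|\dot\xi_t-\dot u_t|^2]^{1/2}\|\dot v\|_{L^2(0,T)}$, and recalling $\|v\|_{H^1_0}=\|\dot v\|_{L^2(0,T)}$, one obtains
\[
|\dot v(t)|^2\le D_t^2\int_0^T|\dot v(s)|^2\,ds\qquad\text{for a.e.\ }t,
\]
where $D_t:=\E^u[|\dot\xi_t-\dot u_t|^2]^{1/2}+C_t$ lies in $L^2(0,T)$ (since $C\in L^2(0,T)$ by hypothesis, while $t\mapsto\E^u[|\dot\xi_t-\dot u_t|^2]\in L^1(0,T)$ by the finite-risk assumption).

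From here the contradiction follows exactly as at the end of the proof of Theorem~\ref{teo:cr-h1-bm}, with $D_t^2$ in the role of $\E^u[|\dot\xi_t-\dot u_t|^2]$: since $D^2\in L^1(0,T)$ there is a non-negligible Borel set $A\subseteq[0,T]$ with $\int_A D_t^2\,dt<1$, and the choice $\dot v=1_A$ yields $|A|=\int_A|\dot v|^2\le\big(\int_A D_t^2\,dt\big)\int_0^T|\dot v|^2=\big(\int_A D_t^2\,dt\big)|A|<|A|$. The main points requiring care are the two interchanges of limits — settled as in Theorem~\ref{teo:cr-h1-bm} thanks to the finite-risk hypothesis — and the bookkeeping observation that the extra term $C_t\|v\|_{H^1_0}$ only replaces $\E^u[|\dot\xi_t-\dot u_t|^2]$ by the still-integrable $D_t^2$, so the structure of \eqref{eq:inequality-h1-useful-crbm} is preserved; there is no substantial new difficulty.
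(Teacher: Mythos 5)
Your proposal is correct and takes essentially the same route as the paper: differentiate the bias-corrected identity in $\epsilon$ and then in $t$, obtain \eqref{eq:inequality-h1-useful-crbm} with $\E^u[\,|\dot{\xi}_t-\dot{u}_t|^2\,]$ replaced by the still-integrable $D_t^2$ (the paper writes $\E^u[\,|\dot{\xi}_t-\dot{u}_t|^2\,]+C_t^2$, the same up to a factor $2$), and conclude with a small set $A$ and $\dot v=1_A$ exactly as in the general case of Theorem \ref{teo:cr-h1-bm}. In particular, your reading of the hypothesis as a bound on the double derivative of the bias $\E^{u+\epsilon v}[\,\xi_t-(u+\epsilon v)_t\,]$ is precisely the interpretation used in the paper's own proof.
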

\begin{proof}
We argue exactly as in the proof above, but we write
\[
\E^{u+\epsilon\,v}[\,\xi_t\,]=\E^{u+\epsilon\,v}[\,u_t\,]+\epsilon\,v(t)+b_t^{u+\epsilon\,v}.
\]
where $b_t^u := \E^u[\,\xi_t-u_t\,]$ is the bias. After differentiation with respect to $\epsilon$ and $t$, we obtain \eqref{eq:inequality-h1-useful-crbm} with $\E^u[ |\dot{\xi}_t-\dot{u}_t|^2]+ C_t^2$ in place of $\E^u[ |\dot{\xi}_t-\dot{u}_t|^2]$ and we conclude arguing as in the proof above.
\end{proof}

We address now analogous results for the intermediate spaces $H^1_0\subset W^{\alpha,2}\subset L^2$, for $\alpha\in(0,1)$, defined as follows.
 
\begin{de}
For $\alpha\in (0,1)$, $p\in(1,\infty)$, the fractional Sobolev space $W^{\alpha,p} ( = W^{\alpha, p}(0,T))$ is defined as the space of functions $u \in L^p(0,T)$ such that their ``energy'' functional
\[
\|  u \|^p_{W^{\alpha,p}_0}:= \int_0^T\int_0^T\frac{|u_t-u_s|^p}{|t-s|^{p \alpha +1}}\,dt\,ds
\]
is finite.
\end{de}

We refer to \cite{MR2944369} for a survey of the theory of fractional Sobolev spaces, although here we need nothing more than the definition above. The space $W^{\alpha,p}$, endowed with a suitable norm, interpolates (in the sense that could be made precise) between the Sobolev space $W^{1,p}$ and $L^p$; for example, it holds $W^{\alpha',p}\subseteq W^{\alpha,p}$ for $0< \alpha\leq\alpha'<1$, and $W^{\alpha, 2} \subseteq H^1$, with
\begin{equation}\label{eq:walpha-h1} \|  u \|^2_{W^{\alpha,2}_0} \le 2 \int_0^T |\dot{u}_r|^2  \int_r^T\int_0^r\frac{1}{|t-s|^{2 \alpha}}\,ds\,dt\,dr \le C_{\alpha,T} \| u\|_{H^1_0}^2.\end{equation}
From this inequality, the above theorem for estimators in $H^1_0$ could be also obtained by the next results.

Let us first consider the Cramer-Rao bound in the quadratic case.
\begin{teo}[Cramer-Rao inequality in $W^{\alpha,2}$] \label{prop:cramer-rao-wiener-fractional-hilbert} Let $\xi$ be an unbiased estimator. For every $\alpha\in(0,1)$, it holds
\[
\E^u\left[\Vert \xi-u\Vert^2_{W^{\alpha,2}_0}\right]\geq \int_0^T\int_0^T\frac{1}{|t-s|^{2\alpha}}\,dt\,ds, \quad \text{for every $u \in \Theta$.}
\]
Equality is attained by the (efficient) estimator $\xi = X$.
\end{teo}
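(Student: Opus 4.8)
The plan is to reduce the assertion to a pointwise lower bound for the expected squared increments of $\xi-u$, and then integrate. Since the Gagliardo integrand is nonnegative, Tonelli's theorem gives
\[
\E^u\!\left[\,\Vert\xi-u\Vert^2_{W^{\alpha,2}_0}\,\right]
= \int_0^T\!\!\int_0^T \frac{\E^u\!\left[\,\big|(\xi_t-u_t)-(\xi_s-u_s)\big|^2\,\right]}{|t-s|^{2\alpha+1}}\,dt\,ds ,
\]
so it is enough to show
\[
\E^u\!\left[\,\big|(\xi_t-u_t)-(\xi_s-u_s)\big|^2\,\right]\ \ge\ |t-s|, \qquad 0\le s,t\le T .
\]
For a fixed pair $(s,t)$ this is trivial when the left-hand side is infinite, so we may assume it finite; relabelling if necessary, take $s\le t$.

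To get this estimate I would run the perturbation argument behind \eqref{eq:identity-cramer-rao-bm}, now applied to the increment $\xi_t-\xi_s$ instead of $\xi_t$ and with the deterministic choice $\dot v:=1_{[s,t]}\in L^2(0,T)$, so that $v(t)-v(s)=t-s$ while $\int_0^T\dot v(r)\,dX^u_r=X^u_t-X^u_s$. Since $u+\epsilon v\in\Theta$ for every $\epsilon\in\R$, unbiasedness at times $t$ and $s$ gives, after subtraction,
\[
\E^{u+\epsilon v}\!\left[\,(\xi_t-\xi_s)-(u_t-u_s)\,\right]=\epsilon\,(t-s),\qquad \epsilon\in\R .
\]
Differentiating at $\epsilon=0$ and interchanging derivative and expectation (legitimate, exactly as in the derivation of \eqref{eq:identity-cramer-rao-bm}: writing $L^{u+\epsilon v}_T/L^u_T=\exp\!\big(\epsilon(X^u_t-X^u_s)-\tfrac{\epsilon^2}{2}(t-s)\big)$, the $\epsilon$-derivative of $\big((\xi_t-u_t)-(\xi_s-u_s)\big)\,L^{u+\epsilon v}_T/L^u_T$ is, for $|\epsilon|\le1$, dominated by an $\E^u$-integrable function, by the assumed finiteness of $\E^u[|(\xi_t-u_t)-(\xi_s-u_s)|^2]$ and the Gaussian tails of $X^u_t-X^u_s$ under $\p^u$) one arrives at
\[
t-s=\E^u\!\left[\,\big((\xi_t-u_t)-(\xi_s-u_s)\big)\,\big(X^u_t-X^u_s\big)\,\right].
\]

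Now Cauchy--Schwarz and $\E^u[(X^u_t-X^u_s)^2]=t-s$ (the process $X^u$ being a Brownian motion under $\p^u$) give $(t-s)^2\le\E^u[|(\xi_t-u_t)-(\xi_s-u_s)|^2]\,(t-s)$, which is the desired bound; inserting it into the displayed identity for the $W^{\alpha,2}$-risk and integrating against the kernel $|t-s|^{-2\alpha-1}\,dt\,ds$ over $[0,T]^2$ yields the inequality. Note that $\int_0^T\int_0^T|t-s|^{-2\alpha}\,dt\,ds$ is finite precisely for $\alpha<1/2$, so for $\alpha\ge1/2$ the statement asserts that no unbiased estimator has finite $W^{\alpha,2}$-risk. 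The efficiency of $\xi=X$ is immediate: then $\xi_t-u_t=X^u_t$, hence $(\xi_t-u_t)-(\xi_s-u_s)=X^u_t-X^u_s$ and $\E^u[|(\xi_t-u_t)-(\xi_s-u_s)|^2]=|t-s|$ for all $s,t$, so equality holds pointwise and therefore in the integrated bound.

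The only genuinely delicate point is the interchange of differentiation and expectation leading to the increment identity; as sketched above it reduces, just as for \eqref{eq:cramer-rao-pr}, to the finiteness of the relevant second moment, using that the Gaussian increments $X^u_t-X^u_s$ possess exponential moments under $\p^u$. Everything else is a routine transcription of the $L^2(\mu)$ argument to the weighted integral of increments that defines the $W^{\alpha,2}$ seminorm.
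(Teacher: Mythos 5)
Your proof is correct and follows essentially the same route as the paper: the paper likewise subtracts the unbiasedness identity \eqref{eq:identity-cramer-rao-bm} at times $s$ and $t$, chooses $\dot v = 1_{[s\wedge t, s\vee t]}$, and applies Cauchy--Schwarz with Ito's isometry to get $\E^u[|(\xi_t-u_t)-(\xi_s-u_s)|^2]\ge |t-s|$ before integrating against $|t-s|^{-2\alpha-1}\,dt\,ds$; your only deviation is specializing $v$ before differentiating rather than after, and spelling out the domination argument that the paper leaves implicit. The treatment of efficiency of $\xi=X$ is the same as well.
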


In particular, if an estimator $\xi$ has finite $W^{\alpha,2}$ risk for some $\alpha \in [1/2, 1)$ and $u \in \Theta$, then it is not unbiased.

\begin{proof}
We introduce the notation $\Delta_t :=\xi_t-u_t$, for $t\in[0,T]$, so that, by Fubini theorem, we write 
\[
\E^u\left[\Vert \xi-u\Vert^2_{W^{\alpha,2}_0}\right]=\int_0^T\int_0^T \frac{\E^u[\,|\Delta_t-\Delta_s|^2\,]}{|t-s|^{2\,\alpha+1}}\,dt\,ds.
\]

If $\xi$ is an unbiased estimator and $v \in H^1_0$, we argue (once again) to obtain  \eqref{eq:identity-cramer-rao-bm}, and subtract such identity for $s$, $t \in [0,T]$, thus
\[
v(t)-v(s)=\E^u\Big[\,(\Delta_t-\Delta_s)\,\int_0^T\dot{v}(r)\,dX^u_r\,\Big].
\]
Hence, Cauchy-Schwarz inequality and Ito isometry give the lower bound
\[
\E^u[\,|\Delta_t-\Delta_s|^2\,]\geq \frac{|v(t)-v(s)|^2}{\int_0^T\dot{v}^2(s)\,ds},\quad  \text{for $s$, $t \in [0,T]$.}
\]
We let $\dot{v}(r)=1_{[s\wedge t,s\vee t]}(r)$, so that
\[
\E^u[\,|\Delta_t-\Delta_s|^2\,]\geq |t-s|\quad  \text{for $s$, $t \in [0,T]$.}
\]
The Cramer-Rao then follows:
\[
\int_0^T\int_0^T \frac{\E^u[\,|\Delta_t-\Delta_s|^2\,]}{|t-s|^{2\,\alpha+1}}\,dt\,ds\geq \int_0^T\int_0^T \frac{1}{|t-s|^{2\,\alpha}}\,dt\,ds.
\]
Finally, if $\xi = X$, then $X - u = X^u$, thus it holds
\[
E^u[\,|X^u_t-X^u_s|^2\,]=|t-s|, \quad \text{for $s$, $t \in [0,T]$.}
\]
and the Cramer-Rao lower bound is attained:
\[
\int_0^T\int_0^T \frac{\E^u[\,|X^u_t-X^u_s|^2\,]}{|t-s|^{2\,\alpha+1}}\,dt\,ds=\int_0^T\int_0^T \frac{1}{|t-s|^{2\,\alpha}}\,dt\,ds.
\]
\end{proof}

In the case of a general exponent $p \in (1,\infty)$ (with $q = p/(p-1)$), arguing similarly, we obtain the following bound, in $W^{\alpha,p}$. As above, we let $c_q = \E[ |Y|^q]$ be the $q$-th moment of a standard Gaussian random variable.
  
\begin{teo}[Cramer-Rao inequality in $W^{\alpha,p}$]  Let $\xi$ be an unbiased estimator. For every $\alpha\in(0,1)$, $p \in (1,\infty)$, it holds
\[
\E^u\left[\Vert \xi-u\Vert^p_{W^{\alpha,p}_0}\right]\geq \frac{1}{c_q^{p/q}}\,\frac{2\,T^{1-p\alpha+p/2}}{p \max\{0, (1/2-\alpha)\}\,(1+p(1/2-\alpha))}.
\] 
\end{teo}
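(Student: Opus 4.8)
The plan is to mimic, step by step, the proof of Theorem \ref{prop:cramer-rao-wiener-fractional-hilbert}, replacing the Cauchy--Schwarz inequality by Hölder's inequality and finishing with an elementary two-dimensional integral. We may assume $\E^u[\Vert \xi-u\Vert^p_{W^{\alpha,p}_0}]<\infty$, otherwise there is nothing to prove (this also legitimates the exchange of differentiation and expectation below, exactly as in the quadratic case and in Remark \ref{oss:lp}). First I would set $\Delta_t:=\xi_t-u_t$ and use Fubini's theorem to rewrite
\[
\E^u\big[\Vert \xi-u\Vert^p_{W^{\alpha,p}_0}\big]=\int_0^T\int_0^T\frac{\E^u[\,|\Delta_t-\Delta_s|^p\,]}{|t-s|^{p\alpha+1}}\,dt\,ds,
\]
so it suffices to bound $\E^u[\,|\Delta_t-\Delta_s|^p\,]$ from below for each pair $(s,t)$. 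As before, for a deterministic $v\in H^1_0$ the identity \eqref{eq:identity-cramer-rao-bm}, written at $t$ and at $s$ and subtracted, gives $v(t)-v(s)=\E^u\big[(\Delta_t-\Delta_s)\int_0^T\dot v(r)\,dX^u_r\big]$.

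Next I would apply Hölder's inequality with conjugate exponents $(p,q)$ to this identity, and use that under $\p^u$ the process $X^u$ is a Brownian motion (Girsanov), so that $\int_0^T\dot v(r)\,dX^u_r$ is centered Gaussian with variance $\int_0^T\dot v(r)^2\,dr$; hence $\E^u\big[\,\big|\int_0^T\dot v\,dX^u\big|^q\,\big]=c_q\big(\int_0^T\dot v^2\big)^{q/2}$. This yields
\[
\E^u[\,|\Delta_t-\Delta_s|^p\,]\geq \frac{|v(t)-v(s)|^p}{c_q^{p/q}\,\big(\int_0^T\dot v(r)^2\,dr\big)^{p/2}}.
\]
Choosing $\dot v(r)=1_{[s\wedge t,\,s\vee t]}(r)$ makes both $|v(t)-v(s)|$ and $\int_0^T\dot v(r)^2\,dr$ equal to $|t-s|$, so that $\E^u[\,|\Delta_t-\Delta_s|^p\,]\geq c_q^{-p/q}\,|t-s|^{p/2}$ for all $s,t\in[0,T]$.

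Finally I would insert this pointwise bound into the double integral, reducing the statement to computing (or declaring $+\infty$) the elementary integral $I:=\int_0^T\int_0^T|t-s|^{\beta}\,dt\,ds$ with $\beta:=p/2-p\alpha-1=p(1/2-\alpha)-1$. By symmetry $I=2\int_0^T\int_0^t(t-s)^{\beta}\,ds\,dt$; the substitution $r=t-s$ shows the inner integral is finite iff $\beta+1=p(1/2-\alpha)>0$, i.e. $\alpha<1/2$, in which case two successive elementary integrations give $I=2T^{\beta+2}/\big((\beta+1)(\beta+2)\big)=2T^{1-p\alpha+p/2}/\big(p(1/2-\alpha)(1+p(1/2-\alpha))\big)$, while for $\alpha\geq 1/2$ the integral diverges and the asserted lower bound is $+\infty$, which the factor $\max\{0,(1/2-\alpha)\}$ in the denominator records. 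I do not foresee a genuine obstacle here: the only points needing a little care are the Gaussian-moment identity under $\p^u$ rather than $\p$, and the case distinction $\alpha\lessgtr 1/2$ in the final integral, which is exactly what forces the $\max\{0,(1/2-\alpha)\}$; note also that, in contrast with $p=2$, the estimator $\xi=X$ is not efficient in this range (consistently with Remark \ref{oss:lp}), so no equality clause is expected.
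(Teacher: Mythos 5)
Your proof is correct and is essentially the argument the paper has in mind: it explicitly combines the subtraction of the identity \eqref{eq:identity-cramer-rao-bm} at the two times $s,t$ (as in the proof of Theorem \ref{prop:cramer-rao-wiener-fractional-hilbert}) with H\"older's inequality and the Gaussian $q$-th moment under $\p^u$ (as in Remark \ref{oss:lp}), and the choice $\dot v = 1_{[s\wedge t, s\vee t]}$ plus the elementary integral of $|t-s|^{p(1/2-\alpha)-1}$ reproduces the stated constant, including the divergence encoded by $\max\{0,(1/2-\alpha)\}$. The paper only says ``arguing similarly'' for this theorem, so your write-up fills in exactly the intended steps, at the same level of rigor (e.g.\ concerning the exchange of differentiation and expectation) as the paper's other proofs.
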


Since
\[
E^u[\,|X^u_t-X^u_s|^p\,]=c_p\,|t-s|^{p/2},
\]
the risk of the estimator $\xi = X$ is given by
\[
\int_0^T\int_0^T \frac{\E^u[\,|X^u_t-X^u_s|^p\,]}{|t-s|^{p\,\alpha+1}}\,dt\,ds=c_p\,\int_0^T\int_0^T \frac{1}{|t-s|^{p\,\alpha+1-p/2}}\,dt\,ds.
\]
As in Remark~\ref{oss:lp} above, we conclude that $X$ is not an efficient estimator with respect to the risk in $W^{\alpha,p}$, for $p\neq 2$.

\begin{oss}
Before we conclude this section, we remark that all the bounds above can be generalized (at least) to the case of a continuous Gaussian martingale, with quadratic variation process $\int_0^t \sigma_s^2\,ds$, $t \in [0,T]$ and also by introducing different energies, such as
\[
\int_0^T\int_0^T\frac{|u(t)-u(s)|^p}{|t-s|^{\alpha\, p+1}}\,\mu(dt,ds),
\]
where $\mu$ is a measure on $[0,T]$ (a natural choice would be to take $\mu$ somehow related to $\sigma^2$). However, we choose to limit the discussion to the case of the Brownian motion, to limit technicalities and emphasize the role played by the norm chosen to estimate the risk. 
\end{oss}

\section{Intensity estimation for the Cox process}\label{sec:cox}
 
Throughout this section, we fix $T\geq 0$ and let $X=(X_t)_{t\in[0,T]}$ be a Poisson process defined on some filtered probability space $(\Omega,\F,(\F_t)_{t\in[0,T]},\p)$, with jump times $(T_k)_{k \ge 1}$ (for $k \ge 1$, we let $T_k(\omega) = T$ in the eventuality that no $k$-th jump occur). As a space of parameters $\Theta$, we consider the set of all absolutely continuous, (strictly) increasing, $\F_0$-measurable processes $u = (u_t)_{t \in [0,T]}$ such that their a.e.\ derivatives $(\dot{u}_t)_{t\in[0,T]}$ satisfy the assumptions of Girsanov theorem for the Poisson process (the proofs work also for slightly smaller sets). Given $u\in\Theta$, we define the probability $\p^u:=L^u\,\p,$ where
\[
L^u:=\prod_{k=1}^{X_T}\dot{u}_{T_k}\,\exp\Big(\,-\int_0^T(\,\dot{u}_s-1\,)\,ds\,\Big).
\]
Girsanov theorem entails that, with respect to the probability $\p^u$, the process $X$ is a Cox process with intensity $(\dot{u}_t)_{t\in [0,T]}$ (see e.g.\ \cite[Section 8.4]{MR2568861} for details on related doubly stochastic Poisson processes). Notice that $\p^u(A)$ does not depend on $u$ for $A \in \F_0$, thus e.g.\ for $t \in [0,T]$, $v \in \Theta$, $u_t$ is integrable with respect to $\p^v$ and its expectation $\E^v[u_t]$ actually does not depend on $v$.

We address the problem of estimating $u$, or equivalently the intensity of $X$ w.r.t.\ $\p^u$, based on a single observation of $X$. In the case of a deterministic intensity, i.e\ when $X$ is an inhomogeneous Poisson process, this is investigated e.g.\ in \cite{MR2486115}, and, similarly to the case of shifted Brownian motion, the following definition is given.

\begin{de}
Any measurable stochastic process $\xi:\Omega\times[0,T]\to \R$ is called an estimator of the intensity $u$. An estimator of the intensity $u$ is said to be unbiased if, for every $u\in\Theta$, $t\in[0,T]$, $\xi_t$ is integrable and it holds $\E^u[\,\xi_t\,]=\E[\,u_t\,]$.
\end{de} 

As in the previous section, we forgo to specify ``of the intensity $u$'' and simply refer to estimators.

Privault and R\'evelliac studied the estimation problem, in the case of deterministic intensities, w.r.t.\ the risk in $L^2(\mu)$, defined as in \eqref{eq:risk-l2}, for any finite Borel measure on $[0,T]$. Their set of parameters $\Theta$ consists of all the space of deterministic absolutely continuous, increasing processes $u$, see \cite[Definition 2.1]{MR2486115}. We briefly show how a similar argument indeed applies as well to the case of stochastic intensities.

\begin{teo}[Cramer-Rao inequality in $L^2(\mu)$] For any unbiased estimator $\xi$, it holds
\[
\E^u[\,\Vert \xi-u\Vert^2_{L^2(\mu)}\,] \geq \int_0^T \E^u[u_t]\,\mu(dt), \quad \text{for every $u\in \Theta$,}
\] 
and equality is attained by the (efficient) estimator $\xi=X$.
\end{teo}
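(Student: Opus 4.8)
The plan is to mimic the proof of the $L^2(\mu)$ Cramer--Rao bound for the shifted Brownian motion, but with the perturbation direction taken $\F_0$-measurable rather than deterministic. This is what is needed here, since the parameter $u$ is itself random: a deterministic direction would only produce a weaker constant (involving $1/\E[1/\dot u_s]$ instead of $\E[\dot u_s]$, by Jensen). We may assume the risk is finite, otherwise there is nothing to prove; then $\xi_t-u_t\in L^2(\p^u)$ for $\mu$-a.e.\ $t$.

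Fix $u\in\Theta$ and $t\in[0,T]$, and consider the $\F_0$-measurable perturbation $v=v^{(t)}$ with $\dot v_s:=\dot u_s\,1_{[0,t]}(s)$. Since $\dot u_s>0$, its perturbed derivative $\dot u_s(1+\epsilon\,1_{[0,t]}(s))$ is a bounded positive multiple of $\dot u_s$, so $u+\epsilon v\in\Theta$ for $|\epsilon|<1$; moreover $v_s=u_{s\wedge t}$, in particular $v_t=u_t$. Differentiating the Cox likelihood, and writing $\sum_{k=1}^{X_T}f(T_k)=\int_0^T f_s\,dX_s$, one computes the score
\[
\frac{d}{d\epsilon}\Big|_{\epsilon=0}\log L^{u+\epsilon v}=\int_0^T\frac{\dot v_s}{\dot u_s}\,(dX_s-\dot u_s\,ds),\qquad\text{hence}\qquad \frac{d}{d\epsilon}\Big|_{\epsilon=0}L^{u+\epsilon v}=L^u\,(X_t-u_t),
\]
because $\dot v_s/\dot u_s=1_{[0,t]}(s)$. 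On the other hand, since $v$ is $\F_0$-measurable, $\p^{u+\epsilon v}$ coincides with $\p$ on $\F_0$, so unbiasedness at the parameter $u+\epsilon v$ gives $\E^{u+\epsilon v}[\xi_t-u_t]=\E\big[(u+\epsilon v)_t\big]-\E[u_t]=\epsilon\,\E[v_t]=\epsilon\,\E[u_t]$. Differentiating at $\epsilon=0$ the representation $\E^{u+\epsilon v}[\xi_t-u_t]=\E\big[(\xi_t-u_t)L^{u+\epsilon v}\big]$ (the interchange of derivative and expectation being justified exactly as in the Brownian case under the finite-risk hypothesis) yields the key identity
\[
\E[u_t]=\E^u\big[(\xi_t-u_t)(X_t-u_t)\big],\qquad t\in[0,T].
\]

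From here the conclusion is quick. By Cauchy--Schwarz, $(\E[u_t])^2\le\E^u[(\xi_t-u_t)^2]\,\E^u[(X_t-u_t)^2]$. Recalling that under $\p^u$ the process $X$ is a Cox process directed by $\dot u$, conditionally on $\F_0$ it is an inhomogeneous Poisson process of intensity $\dot u$, so $X_t$ is conditionally $\mathrm{Poisson}(u_t)$ and $\E^u[(X_t-u_t)^2\mid\F_0]=u_t$, whence $\E^u[(X_t-u_t)^2]=\E^u[u_t]=\E[u_t]$. Therefore $\E^u[(\xi_t-u_t)^2]\ge\E[u_t]$ when $\E[u_t]>0$, and this is trivial when $\E[u_t]=0$. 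Integrating against $\mu$,
\[
\E^u[\,\|\xi-u\|^2_{L^2(\mu)}\,]=\int_0^T\E^u[(\xi_t-u_t)^2]\,\mu(dt)\ge\int_0^T\E[u_t]\,\mu(dt)=\int_0^T\E^u[u_t]\,\mu(dt).
\]
For $\xi=X$ the same conditional-Poisson computation gives $\E^u[(X_t-u_t)^2]=\E^u[u_t]$ for every $t$, so equality holds throughout and $\hat u=X$ is efficient.

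The one genuinely new point compared with the Brownian case is the computation of the Cox score $\tfrac{d}{d\epsilon}|_{\epsilon=0}\log L^{u+\epsilon v}=\int_0^T\tfrac{\dot v_s}{\dot u_s}(dX_s-\dot u_s\,ds)$, together with the observation that the optimal direction $\dot v=\dot u\,1_{[0,t]}$ must be chosen $\F_0$-measurable; its score equals precisely the compensated counting process $X_t-u_t$, which both explains the sharp constant and makes the efficiency of $X$ transparent. As in the Brownian setting, the remaining technical point — differentiating $\epsilon\mapsto\E^{u+\epsilon v}[\xi_t]$ past the expectation — is handled under the standing finite-risk assumption (if the risk is infinite the bound is vacuous).
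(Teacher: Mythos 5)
Your proposal is correct and follows essentially the same route as the paper: perturb $u$ by an $\F_0$-measurable direction $v$, differentiate the Girsanov likelihood to get the score $\int_0^T \tfrac{\dot v_s}{\dot u_s}\,(dX_s-\dot u_s\,ds)$, apply Cauchy--Schwarz, and take $\dot v=\dot u\,1_{[0,t]}$. The only cosmetic difference is that you specialize to this $v$ from the outset (so the score is directly the compensated process $X_t-u_t$, whose second moment you compute by conditioning on $\F_0$), whereas the paper first derives the identity for general $v\in\Theta$ and then optimizes; the content is identical.
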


\begin{proof}
For every process $v\in\Theta$, since $\xi$ is unbiased we have
\[
\E^{u+\epsilon\,v}[\,\xi_t\,] =\E^{u+\epsilon\,v}[\,u_t+\epsilon\,v_t\,]
 =\E^{u+\epsilon\,v}[\,u_t\,]+\epsilon\,\E^{u+\epsilon v}[v_t], \quad \text{ for $t\in[0,T]$.}
\]
Differentiating w.r.t.\ $\epsilon$, as in in \cite[Proposition 2.3]{MR2486115} we obtain the identity
\begin{equation}
\label{eqn:CR-cox}
\begin{aligned}
\E^u[\,v_t\,]&=\frac{d}{d\epsilon}\Big|_{\epsilon=0}\E^{u+\epsilon\,v}[\,\xi_t-u_t\,]\\ 
&= \E^u\Big[\,(\xi_t-u_t)\,\int_0^T\frac{\dot{v}_s}{\dot{u}_s}\,(dX_s-\dot{u}_s\,ds)\,\Big].
\end{aligned}
\end{equation}
By Cauchy-Schwarz inequality and the fact that $X$ is a Cox process with intensity $\dot{u}$, we get, for $t \in [0,T]$,
\[
\E^u[\,v_t\,]^2\leq \E^u[\,(\xi_t-u_t)^2\,]\,\E^u\Big[\,\int_0^T\frac{\dot{v}^2_s}{\dot{u}_s}\,ds\,\Big]\,\text{ thus }\,\E^u[\,(\xi_t-u_t)^2\,]\geq \E^u[\,u_t\,],
\]
once we let $\dot{v}= \dot{u}\, 1_{[0,t]}$. The thesis follows by integration w.r.t.\ $\mu$.
\end{proof}

Differently from the case of Brownian motion, the lower bound depends on the parameter $u \in \Theta$. This is quite natural in view of the classical, finite-dimensional, Cramer-Rao lower bound, where the inverse of the Fisher information appears, measuring the local regularity of the densities: when $u$ is small, the density becomes very peaked and the bound becomes trivial.

Since the intensity $u \in \Theta$ is absolutely continuous, also in this case we investigate lower bounds for the  $H^1_0$ risk: also in this case, no unbiased estimators exist. In the next result, we also collect the case of fractional Sobolev spaces $W^{\alpha,2}$, for $\alpha \in (0,1)$.

\begin{teo} For any unbiased estimator $\xi$, $\alpha \in (0,1)$, it holds
\[ \E^u[\, \| \xi - u \|_{W^{\alpha,2}_0}^2 \, ] \ge 2 \int_0^T \E^u[\dot{u}_r] \int_r^T\int_0^r \frac{1}{(t-s)^{2\alpha+1}} ds dt dr, \]
for every $u \in \Theta$. There exists no unbiased estimator $\xi$ with finite risk in $W^{\alpha,2}$ for $\alpha \in [1/2,1)$, as well as in $H^1_0$.
\end{teo}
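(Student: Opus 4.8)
The plan is to mimic the proof of Theorem~\ref{prop:cramer-rao-wiener-fractional-hilbert}, replacing identity~\eqref{eq:identity-cramer-rao-bm} with identity~\eqref{eqn:CR-cox} and the Brownian stochastic integral with the compensated Cox martingale. Write $\Delta_t:=\xi_t-u_t$, fix $u\in\Theta$, and for $0\le s\le t\le T$ subtract~\eqref{eqn:CR-cox} at time $s$ from the one at time $t$: for every $v\in\Theta$,
\[
\E^u[\,v_t-v_s\,]=\E^u\Big[\,(\Delta_t-\Delta_s)\,\int_0^T\frac{\dot v_r}{\dot u_r}\,(dX_r-\dot u_r\,dr)\,\Big].
\]
I would then take $\dot v_r=\dot u_r\,1_{[s,t]}(r)$, which is admissible ($0\le\dot v\le\dot u$, so $u+\veps v\in\Theta$ for small $\veps$) exactly as in the proof of the $L^2(\mu)$ bound above; the integrand reduces to $1_{[s,t]}(r)$, and since under $\p^u$ the process $X$ is a Cox process with intensity $\dot u$, Cauchy--Schwarz together with the $L^2$ isometry for the compensated integral yields
\[
\Big(\E^u\Big[\int_s^t\dot u_r\,dr\Big]\Big)^2\le \E^u[\,|\Delta_t-\Delta_s|^2\,]\;\E^u\Big[\int_s^t\dot u_r\,dr\Big],
\]
hence the pointwise estimate $\E^u[\,|\Delta_t-\Delta_s|^2\,]\ge \E^u\big[\int_s^t\dot u_r\,dr\big]$ (which also holds trivially should the right-hand side vanish).

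Next I would plug this into the Fubini expansion of the risk and integrate once more. Writing $\E^u[\,\|\xi-u\|^2_{W^{\alpha,2}_0}\,]=\int_0^T\!\int_0^T|t-s|^{-2\alpha-1}\,\E^u[\,|\Delta_t-\Delta_s|^2\,]\,dt\,ds$, the pointwise bound and the symmetry $(s,t)\leftrightarrow(t,s)$ give a lower bound of $2\int_0^T\!\int_0^T 1_{\{s<t\}}(t-s)^{-2\alpha-1}\E^u[\int_s^t\dot u_r\,dr]\,dt\,ds$; representing $\int_s^t\dot u_r\,dr=\int_0^T\dot u_r\,1_{\{s<r<t\}}\,dr$ and applying Tonelli (all integrands nonnegative) to integrate first in $(s,t)$ over the slab $\{0<s<r<t<T\}$ produces precisely $2\int_0^T\E^u[\dot u_r]\,\big(\int_r^T\!\int_0^r(t-s)^{-2\alpha-1}\,ds\,dt\big)\,dr$, the claimed bound. (One also checks equality for $\xi=X$ from $\E^u[\,|X_t-X_s-(u_t-u_s)|^2\,]=\E^u[\int_s^t\dot u_r\,dr]$.)

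For the non-existence statement, I would verify that the deterministic factor $I(r):=\int_r^T\!\int_0^r(t-s)^{-2\alpha-1}\,ds\,dt$ equals $+\infty$ for every $r\in(0,T)$ once $\alpha\ge1/2$: carrying out the $s$-integration gives $I(r)=\frac{1}{2\alpha}\int_r^T\big[(t-r)^{-2\alpha}-t^{-2\alpha}\big]\,dt$, and the first term fails to be integrable at $t=r$ exactly when $2\alpha\ge1$, the second being bounded there. Since $u\in\Theta$ is strictly increasing, $\dot u_r\ge0$ a.e.\ with $\int_0^T\E^u[\dot u_r]\,dr=\E^u[u_T-u_0]>0$, so $\E^u[\dot u_r]>0$ on a non-negligible set; combined with $I(r)=+\infty$ on $(0,T)$, the lower bound forces $\E^u[\,\|\xi-u\|^2_{W^{\alpha,2}_0}\,]=+\infty$ for $\alpha\in[1/2,1)$, so no unbiased estimator can have finite $W^{\alpha,2}$ risk. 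The $H^1_0$ case then follows from the embedding~\eqref{eq:walpha-h1}, since a finite $H^1_0$ risk would entail a finite $W^{1/2,2}$ risk.

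I do not expect a genuine obstacle: once~\eqref{eqn:CR-cox} is available the argument is two applications of Tonelli plus an elementary divergence computation. The only delicate point is the admissibility of the (random, $\F_0$-measurable) perturbation $\dot v=\dot u\,1_{[s,t]}$ and of the differentiation under the expectation defining~\eqref{eqn:CR-cox} with it --- but this is precisely what is used, with $\dot v=\dot u\,1_{[0,t]}$, in the proof of the $L^2(\mu)$ Cramer--Rao bound for the Cox process, so nothing beyond that is needed. A minor conceptual difference from the Brownian case is that here the bound depends on $u$ through $\E^u[\dot u_r]$, but for the non-existence conclusion only its positivity in integrated form is used.
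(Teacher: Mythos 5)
Your proposal is correct and follows essentially the same route as the paper: subtract the Cramer--Rao identity \eqref{eqn:CR-cox} at two times, apply Cauchy--Schwarz with the choice $\dot v_r=\dot u_r\,1_{[s\wedge t,s\vee t]}(r)$ to get $\E^u[|\Delta_t-\Delta_s|^2]\ge\E^u[\int_s^t\dot u_r\,dr]$, integrate against $|t-s|^{-2\alpha-1}\,dt\,ds$ via Tonelli, note the divergence of the deterministic double integral for $\alpha\ge 1/2$, and conclude the $H^1_0$ case from \eqref{eq:walpha-h1}. The extra checks you add (admissibility of the perturbation, the explicit divergence computation, equality for $\xi=X$) are consistent with, and slightly more detailed than, the paper's argument.
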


\begin{proof}
We subtract \eqref{eqn:CR-cox} for two different times $s$, $t \in [0,T]$, and apply Cauchy-Schwarz, obtaining
\[
\E^u[\,|\Delta_t-\Delta_s|^2\,] \geq \frac{\E^u[\,|v_t-v_s|\,]^2}{\E^u\left[\int_0^T\frac{\dot{v}^2_s}{\dot{u}_s}\,ds\right]}.
\]
Hence, taking $\dot{v}_r=1_{[s\wedge t,s\vee t]}(r)\,\dot{u}_r,$ we have
\[
\E^u[\,|\Delta_t-\Delta_s|^2\,]\geq \E^u[\,|u_t-u_s|\,],\quad \text{for every $s$, $t \in [0,T]$.}
\]
If $s<t$, then the right hand side above coincides with $\E^u[\,\int_s^t \dot{u}_r \, dr \,]$. Integrating with respect to $s$, $t \in [0,T]$, with measure $|t-s|^{-2\alpha -1} dtds$, we obtain the required inequality. To deduce that no unbiased estimators with finite risk exist, it is sufficient to notice that the double integral equals $+\infty$, for $\alpha \in [1/2,1)$, and $\E[\dot{u}_r]>0$ for a.e.\ $r \in [0,T]$. The case of $H^1_0$ follows at once from inequality \eqref{eq:walpha-h1}.
\end{proof}

\section{Stochastic calculus of variations }\label{sec:malliavin}
In this section, we briefly recall some results concerning Malliavin Calculus on the classical Wiener space (we refer to the monograph \cite{MR2200233} for details), limiting ourselves the essentials for constructing super-efficient estimators. 

In the framework of Section \ref{sec:drift-bm}, i.e.\ if $X = (X_t)_{t \in [0,T]}$ is a Brownian motion (on the finite interval $[0,T]$), defined on some filtered probability space $(\Omega,\F,(\F_t)_{t \in [0,T]},\p)$, we introduce the space $\mathcal{S}$ of smooth functionals, as those in the form
\[
F=\phi\left( X_{t_1}, \dots, X_{t_n}\right),\]
for some $t_1, \ldots, t_n \in [0,T]$ and $\phi \in \mathcal{C}^\infty_b(\R^n)$ ($n\ge 0$). The Malliavin derivative $DF$ is then defined as the $L^2(0,T)$-valued random variable
\[
D_tF:=\sum_{i=1}^n\frac{\partial\phi}{\partial x_i}\left( X_{t_1}, \dots, X_{t_n}\right) 1_{[0,t_i]}(t),\, \text{ for a.e.\ $t\in[0,T]$.}
\]
For $h \in L^2(0,T)$, we let $D_{h} F:=\int_0^T D_tF\,h(t)dt$ (in the classical Wiener space framework, this corresponds to differentiation along the direction in $H^1_0$ given by $\tilde{h}(t) = \int_0^t h(s)ds$, $t \in [0,T]$: differently from the previous sections, we prefer to focus on the space $L^2(0,T)$ instead of $H^1_0$). The Cameron-Martin theorem entails the following integration by parts formula for smooth functionals.
\begin{prop}
Let $F\in\mathcal{S}$ and $h \in L^2(0,T)$. Then, it holds
\begin{equation}\label{eq:ibp-bm}
\E[\,D_{h}F\,]=\E\left[\,F  h^*\right],
\end{equation}
where we let $h^* = \int_0^T h(s) dX_s$ be the Ito(-Wiener) integral.
\end{prop}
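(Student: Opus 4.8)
The plan is to obtain \eqref{eq:ibp-bm} from the Cameron-Martin theorem by the same infinitesimal-shift device already used repeatedly in Section~\ref{sec:drift-bm}. Fix $F = \phi(X_{t_1},\dots,X_{t_n}) \in \mathcal{S}$ and $h \in L^2(0,T)$, set $\tilde h(t) := \int_0^t h(s)\,ds \in H^1_0$, and for $\veps \in \R$ introduce the exponential martingale $L^\veps := \exp\big(\veps\,h^* - \tfrac{\veps^2}{2}\|h\|_{L^2(0,T)}^2\big)$. Since $h$ is deterministic and square-integrable, Girsanov's theorem applies: under the probability $\p^\veps := L^\veps\,\p$ the process $t \mapsto X_t - \veps\tilde h(t)$ is a Brownian motion. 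Consequently, writing $F^\veps := \phi\big(X_{t_1} - \veps\tilde h(t_1),\dots,X_{t_n} - \veps\tilde h(t_n)\big)$, we have for every $\veps$
\[
\E\big[\,L^\veps\,F^\veps\,\big] = \E^{\p^\veps}\big[\,\phi\big(X_{t_1} - \veps\tilde h(t_1),\dots,X_{t_n} - \veps\tilde h(t_n)\big)\,\big] = \E[F],
\]
a quantity independent of $\veps$.

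Next I would differentiate this identity with respect to $\veps$ and evaluate at $\veps = 0$. The right-hand side has zero derivative; on the left-hand side, the product rule produces two terms. From $L^0 = 1$ and $\tfrac{d}{d\veps}\big|_{\veps=0} L^\veps = h^*$ we get the contribution $\E[\,F\,h^*\,]$. From the chain rule applied to $\phi$, using $\tilde h(t_i) = \int_0^T 1_{[0,t_i]}(s)\,h(s)\,ds$ together with the definition of $D_sF$, we get
\[
-\,\E\Big[\,\sum_{i=1}^n \frac{\partial\phi}{\partial x_i}\big(X_{t_1},\dots,X_{t_n}\big)\,\tilde h(t_i)\,\Big] = -\,\E\Big[\int_0^T D_sF\,h(s)\,ds\Big] = -\,\E[\,D_hF\,].
\]
Adding the two contributions and setting the sum equal to $0$ gives $\E[D_hF] = \E[F\,h^*]$, which is exactly \eqref{eq:ibp-bm}.

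The only genuine point to check — and the main (though standard) obstacle — is the exchange of differentiation and expectation. I would justify it by dominated convergence: for $|\veps| \le \delta$ the $\veps$-derivative of the integrand $L^\veps F^\veps$ is bounded in absolute value by $C\,L^\veps\,(1 + |h^*|)$, where $C$ depends only on $\delta$, $\|h\|_{L^2(0,T)}$, the times $t_i$, and the sup-norms of $\phi$ and of its first-order partial derivatives, all of which are finite since $\phi \in \mathcal{C}^\infty_b(\R^n)$. Since moreover $L^\veps = \exp\big(\veps h^* - \tfrac{\veps^2}{2}\|h\|_{L^2(0,T)}^2\big) \le \exp(\delta|h^*|)$, the family is dominated by the integrable random variable $C\,e^{\delta|h^*|}(1+|h^*|)$ — integrable because $h^* = \int_0^T h\,dX$ is a centered Gaussian. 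This legitimizes differentiation under the expectation sign and completes the argument. (Alternatively, one may reduce to the case $t_1 < \dots < t_n$, pass to the independent increments $X_{t_i} - X_{t_{i-1}}$, and perform a one-dimensional Gaussian integration by parts in each variable; the Cameron-Martin route above is closer in spirit to the rest of the paper.)
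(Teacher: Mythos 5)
Your argument is correct and is precisely the route the paper has in mind: the proposition is stated there without a detailed proof, with the remark that it is entailed by the Cameron--Martin theorem, and your proof simply carries out that shift-and-differentiate argument (Girsanov with drift $\veps\tilde h$, differentiation at $\veps=0$, dominated convergence using the Gaussian integrability of $h^*$). Nothing is missing; the domination bound $C\,e^{\delta|h^*|}(1+|h^*|)$ adequately justifies the interchange of derivative and expectation.
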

A straightforward consequence of the integration by parts formula above is closability for the operator $D:\,\mathcal{S}\subset L^2(\Omega)\to L^2(\Omega\times[0,T])$. The domain of its closure defines the Sobolev-Malliavin space $\mathbb{D}^{1,2}$, on which the operator $D$ extends continuously.

\begin{prop}[chain rule]
Let $F_1, \ldots, F_n\in\mathbb{D}^{1,2}$ and $\phi\in C^1_b(\R^n)$. Then, it holds $\phi(F_1,\ldots, F_n)\in \mathbb{D}^{1,2}$ with 
\[
D_t\phi(F_1, \ldots, F_n)=\sum_{i=1}^n\frac{\partial\phi}{\partial x_i}\,(F_1, \ldots, F_n)\,D_tF_i, \quad \text{for a.e.\ $t \in [0,T]$.}
\] 
\end{prop}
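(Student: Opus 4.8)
The plan is to establish the chain rule first on smooth functionals, where it reduces to the classical chain rule for $\phi \in C^1_b(\R^n)$ composed with the finite-dimensional maps $x \mapsto \phi(\psi_1(x), \ldots, \psi_n(x))$, and then to extend to $\mathbb{D}^{1,2}$ by a density and closability argument. So first I would take $F_1, \ldots, F_n \in \mathcal{S}$, each of the form $F_i = \psi_i(X_{t_1}, \ldots, X_{t_m})$ (after enlarging to a common finite set of times $t_1 < \ldots < t_m$), and observe that $\phi(F_1, \ldots, F_n)$ is itself a smooth functional, namely $(\phi \circ \Psi)(X_{t_1}, \ldots, X_{t_m})$ with $\Psi = (\psi_1, \ldots, \psi_n)$. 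Applying the definition of $D$ together with the ordinary chain rule for $\phi \circ \Psi$ yields the formula directly in this case. One technical caveat: the definition of $\mathcal{S}$ in the excerpt requires $\phi \in C^\infty_b$, whereas here $\phi$ is only $C^1_b$, so $\phi(F_1,\ldots,F_n)$ need not lie in $\mathcal{S}$; this is harmless since the identity $D_t \phi(F_1,\ldots,F_n) = \sum_i \partial_i\phi(F_1,\ldots,F_n) D_t F_i$ still holds a.e.\ as an identity of $L^2$ functions because the finite-dimensional chain rule applies to $C^1$ functions.

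Next I would pass to general $F_1, \ldots, F_n \in \mathbb{D}^{1,2}$ by approximation. Pick sequences $F_i^{(k)} \in \mathcal{S}$ with $F_i^{(k)} \to F_i$ in $L^2(\Omega)$ and $DF_i^{(k)} \to DF_i$ in $L^2(\Omega \times [0,T])$, which exist by definition of $\mathbb{D}^{1,2}$ as the domain of the closure of $D$. By the smooth case,
\[
D_t \phi(F_1^{(k)}, \ldots, F_n^{(k)}) = \sum_{i=1}^n \frac{\partial \phi}{\partial x_i}(F_1^{(k)}, \ldots, F_n^{(k)})\, D_t F_i^{(k)}.
\]
I would then argue that the left-hand side converges in $L^2(\Omega)$ (indeed $\phi$ is Lipschitz, being $C^1_b$, so $\phi(F^{(k)}) \to \phi(F)$ in $L^2$) and that the right-hand side converges in $L^2(\Omega \times [0,T])$ to $\sum_i \partial_i\phi(F_1, \ldots, F_n) D_t F_i$; closability of $D$ then forces $\phi(F_1,\ldots,F_n) \in \mathbb{D}^{1,2}$ with the claimed derivative.

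The main obstacle is the convergence of the right-hand side, specifically handling the product $\partial_i\phi(F^{(k)}) \cdot D F_i^{(k)}$ where the first factor converges only in probability (after passing to a subsequence, a.e.) and is bounded, while the second converges in $L^2(\Omega\times[0,T])$. The clean way is: $\partial_i \phi$ is bounded, so writing $\partial_i\phi(F^{(k)}) DF_i^{(k)} - \partial_i\phi(F) DF_i = \partial_i\phi(F^{(k)})(DF_i^{(k)} - DF_i) + (\partial_i\phi(F^{(k)}) - \partial_i\phi(F)) DF_i$, the first term goes to $0$ in $L^2$ by boundedness, and the second goes to $0$ by dominated convergence (the integrand is dominated by $2\|\partial_i\phi\|_\infty^2 |DF_i|^2$, which is integrable, and tends to $0$ a.e.\ along a subsequence by continuity of $\partial_i\phi$ and a.e.\ convergence $F^{(k)} \to F$). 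Since $\partial_i\phi$ is merely continuous and bounded (not uniformly continuous in general), passing to an a.e.-convergent subsequence is genuinely needed here; once that is done the closability of $D$ delivers the conclusion and, as usual, the limit being independent of the approximating sequence is automatic from closability.
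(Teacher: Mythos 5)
The paper itself does not prove this proposition; it is recalled from Nualart's monograph, and your density-plus-closability strategy is indeed the standard route. Your second step (approximating $F_i\in\mathbb{D}^{1,2}$ by $F_i^{(k)}\in\mathcal{S}$, splitting $\partial_i\phi(F^{(k)})DF_i^{(k)}-\partial_i\phi(F)DF_i$ into two terms, using boundedness of $\partial_i\phi$ for the first and dominated convergence along an a.e.-convergent subsequence for the second, then invoking closedness of $D$) is correct as written.

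The gap is in your base case. For $F_i\in\mathcal{S}$ and $\phi$ only $C^1_b$, the functional $\phi(F_1,\ldots,F_n)=(\phi\circ\Psi)(X_{t_1},\ldots,X_{t_m})$ is \emph{not} in $\mathcal{S}$, and the operator $D$ is defined only on $\mathcal{S}$ and then on the domain of its closure. So the left-hand side $D_t\phi(F_1,\ldots,F_n)$ is not yet defined, and saying that ``the identity still holds a.e.\ because the finite-dimensional chain rule applies to $C^1$ functions'' is circular: the finite-dimensional chain rule tells you $\phi\circ\Psi\in C^1_b(\R^m)$ with the expected gradient, but it does not tell you that $\phi(F)\in\mathbb{D}^{1,2}$ nor that its Malliavin derivative is computed by the same formula as for smooth functionals. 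This base case needs its own approximation argument: mollify, setting $\phi_\epsilon=\phi*\rho_\epsilon\in C^\infty_b(\R^n)$, so that $\phi_\epsilon(F_1,\ldots,F_n)\in\mathcal{S}$ and $D_t\phi_\epsilon(F)=\sum_i\partial_i\phi_\epsilon(F)D_tF_i$ holds by the definition of $D$; then let $\epsilon\to 0$, using that $\phi_\epsilon\to\phi$ and $\partial_i\phi_\epsilon\to\partial_i\phi$ locally uniformly and boundedly, so that $\phi_\epsilon(F)\to\phi(F)$ in $L^2(\Omega)$ and $D\phi_\epsilon(F)\to\sum_i\partial_i\phi(F)DF_i$ in $L^2(\Omega\times[0,T])$, and conclude by closability that $\phi(F)\in\mathbb{D}^{1,2}$ with the stated derivative. (Alternatively, one can combine the mollification of $\phi$ and the approximation of the $F_i$ in a single limiting step.) With this lemma inserted, your step two applies verbatim and the proof is complete.
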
 

\begin{oss}[Malliavin Calculus for a Cox process]\label{rem:calculus-cox}
It seems reasonable to develop a theory of differential calculus for Cox processes, akin to that for Poisson processes introduced \cite{MR2486115}: in the setting of Section \ref{sec:cox}, i.e., if we let $(X_t)_{t \in [0,T]}$ be a Cox process on $(\Omega,\F,(\F_t)_{t \in [0,T]},\p)$, with intensity $\lambda = (\lambda_t)_{t \in [0,T]}$ and jump times $(T_k)_{k \ge 1}$. Then, we let $\mathcal{S}$ be the space of random variables $F$ in the form
\[
F=f_0\,1_{\{X_T=0\}}+\sum_{n=1}^\infty 1_{\{X_T=n\}}\,f_n(T_1,\ldots,T_n),
\]
where, for $n \ge 0$, $f_n : \Omega \times \R^n \to \R$ is bounded, measurable with respect to $\F_0 \times \mathcal{B}(\R^n)$ (i.e.\ its randomness depends only on $\lambda$) and for every $\omega \in \Omega$, $f_n(\omega; \cdot)$ is $\C^\infty_b(\R^n)$ and symmetric, i.e., $f_n(\omega; t_1, \ldots, t_n)$ is left unchanged by any permutation of the coordinates $(t_1, \ldots, t_n)$ and that, for every $n \ge 0$, it holds $f_n(\omega; t_1, \ldots, t_n)=f_{n+1}(\omega; t_1, \ldots, t_n, T)$, for $\omega \in \Omega$, $t_1, \ldots, t_n \in \R$.

 For $F \in \mathcal{S}$, we may let $DF(\omega) \in L^2(0,T)$
\[
D_tF:=-\sum_{n=1}^\infty 1_{\{X_T=n\}}\,\sum_{k=1}^n 1_{[0,T_k]}(t)\,\frac{1}{\lambda_{T_k}}\,\partial_k f_n(T_1,\ldots,T_n)\,\lambda_t,
\]
for a.e.\ $t\in[0,T]$. 

One can prove the validity of the chain rule and an integration-by-parts formula, providing some notion of divergence, thus defining Sobolev-Malliavin spaces in this setting. However, it is presently not clear how to effectively use such calculus to produce super-efficient Stein-type estimators, see Remark \ref{rem:stein-cox} below. 
\end{oss}

\section{Super-efficient estimators}\label{sec:stein}

In this section, we address the problem of Stein type, super-efficient estimators for the drift of a shifted Brownian motion, with respect to risks computed in the Sobolev spaces introduced above.

For $L^2(\mu)$-type risks, super-efficient estimators in the form $X+\xi$ were first studied in \cite{MR2458197}. Privault and R\'{e}veillac consider a process $\xi_t=D_{1_{[0,t]}}\log F$, $t \in [0,T]$, where $F$ is any $\p$-a.s.\ non-negative random variable in $\mathbb{D}^{1,2}$ such that $\sqrt{F}$ is $\Delta$-superharmonic w.r.t.\ a suitable ``Laplacian'' operator, actually related to the structure of the risk considered (which is not, in the Gaussian case, the usual Gross-Malliavin Laplacian). 
We show that a similar approach leads to super-efficient estimators also in fractional Sobolev spaces $W^{\alpha, 2}$, for $\alpha \in [0,1/2)$ (of course, this perturbative approach does not provide any information for larger values of $\alpha$). Indeed, for every $\xi = (\xi_t)_{t \in [0,T]}$, with $\E^u[ \|\xi \|^2_{W^{2,\alpha}_0} ]  < \infty$, we write
\begin{align*}
\E^u[\Vert X+\xi-u\Vert^2_{W^{\alpha,2}_0}]  & = \E^u \left[ \Vert X-u\Vert^2_{W^{\alpha,2}_0}+ \Vert \xi\Vert^2_{W^{\alpha,2}_0}\right] + \\
& +2\int \E^u\big[\,(\xi_t-\xi_s)\,[ (X_t-u_t)-(X_s-u_s)] \,\big] d\mu_\alpha(s,t),
\end{align*}
where we introduce the Borel measure $\mu_\alpha(ds,dt) = 2 \, (t-s)^{-2\alpha-1} 1_{\{s < t\}}dsdt$ on $[0,T]^2$. If $\xi_t - \xi_s \in \mathbb{D}^{1,2}$, for every $s$, $t \in [0,T]$, with $s < t$, the integration by parts \eqref{eq:ibp-bm}  for the Malliavin derivative (to be rigorous, we should write in what follows $D^u$, because the derivative is built with respect to the probability $\p^u$, not $\p$), entail
\begin{align*}
\E^u\big[\,(\xi_t-\xi_s)\,[(X_t-u_t)-(X_s-u_s)]\,\big]&=\E^u\big[\,(\xi_t-\xi_s)\,(X^u_t-X^u_s)\,\big]\\
&=\E^u\left[(\xi_t-\xi_s) \, 1_{[s,t]}^* \right]\\
&=\E^u\Big[\, \tilde{D}_{s,t} (\xi_t-\xi_s)\,\Big].
\end{align*}
where $\tilde{D}_{s,t}  F:= D_{1_{[s,t]}} \int_s^t D_rF\,dr$. Hence, if we let $\rho = \E^u [ \Vert X-u\Vert^2_{W^{\alpha,2}_0}]$ denote the Cramer-Rao lower bound, we deduce
\[
\E^u\left[\Vert X+\xi-u\Vert^2_{W^{\alpha,2}_0}\right]=\rho+\int\E^u\left[|\xi_t-\xi_s|^2+2\tilde{D}_{s,t}(\xi_t-\xi_s)\right]\mu_\alpha(ds,dt).
\]
It is then convenient to introduce the following notion of Laplacian,
\begin{equation}\label{eq:delta-alpha}
\Delta_\alpha F: =\int_{[0,T]^2} ( \tilde{D}_{s,t} ) ^2 F \mu_\alpha(ds,dt),
\end{equation}
initially defined on $\mathcal{S}$. Arguing e.g.\ as in \cite[Proposition 4.5]{MR2458197}, it is possible to show that $\Delta_\alpha: \mathcal{S} \subseteq L^2(\Omega, \p^u) \to L^2(\Omega, \p^u)$ is closable and that the random variables $G \in \mathbb{D}^{1,2}$, with
\begin{equation}\label{eq:useful-domain-delta-alpha} \text{$\tilde{D}_{s,t} G\in\mathbb{D}^{1,2}$, for a.e.\ $s$, $t\in[0,T]$  and $\tilde{D}_{s,t}^2 G\in L^2\left(\Omega \times [0,T]^2,\p \times \mu_\alpha\right)$,}\end{equation}
belong to the domain of the closure, so that $\Delta_\alpha G$ is well-defined (actually, by the same expression as in \eqref{eq:delta-alpha}). Moreover, the operator $\Delta_\alpha$ is of diffusion type, i.e., for every $F_1, \ldots, F_n \in \mathcal{S}$, $\phi \in C^2_b(\R^n)$, the function $\phi \circ \mathbf{F}$ (we write $\mathbf{F} = (F_1, \ldots, F_n)$) belongs to the domain of $\Delta_\alpha$, and it holds
\begin{equation} \label{eq:chain-rule-delta-alpha}\Delta_\alpha ( \phi \circ \mathbf{F})  = \sum_{i=1}^n \frac{\partial \phi}{\partial x_i} (\mathbf{F}) \,  \Delta_\alpha F_i +  \sum_{i,j=1}^n \frac{\partial^2 \phi}{\partial x_i \partial x_j}(\mathbf{F})  \, \Gamma_{\alpha}(F_i, F_j), \quad \text{$\p$-a.e.\ in $\Omega$,}\end{equation}
with $\Gamma_{\alpha}(F_i, F_j) = \int_{[0,T]^2} \tilde{D}_{s,t} F_i \tilde{D}_{s,t} F_j \mu_\alpha(ds,dt)$, for $i$, $j \in \{1, \ldots, n\}$ (the Malliavin matrix associated to $(F_i)_{i=1}^n$). This identity, by density, extends under natural integrability assumptions on $\mathbf{F}$ as well as on $\phi$.

The operator $\Delta_\alpha$ enters in the picture if we assume that process $\xi$ is of the form $\xi_t=\tilde{D}_{0,t}\log F^2$, $t \in [0,T]$, for some $\p$-a.e.\ positive random variable $F \in \mathbb{D}^{1,2}$, with $G = \log F^2$ satisfying \eqref{eq:useful-domain-delta-alpha}.
If we are in a position to apply the chain rule \eqref{eq:chain-rule-delta-alpha}, it holds 
\begin{align*}
\Delta_\alpha \log F^2  & = 2 \frac{ \Delta_\alpha F }{F} - \frac{2}{F^2}\Gamma_\alpha(F, F) \\
& = \frac{ 2 \Delta_\alpha F }{F} - \frac{1}{2} \Gamma_\alpha(\log F^2, \log F^2)
\end{align*} 
which can be explicitly written in terms of $\xi$ as
\[ \frac{ 4\Delta F }{F} = \int_{[0,T]^2} \left[2 \tilde{D}_{s,t} (\xi_t - \xi_s) + |\xi_t - \xi_s|^2 \right]\mu_\alpha(ds,dt).\]
As a result, we obtain
\[
\E^u\left[\Vert X+\xi-u\Vert^2_{W_0^{\alpha,2}}\right]=\rho+4\,\E^u\left[\frac{\Delta_\alpha F}{F}\right].
\]

Therefore, in order to find super-efficient estimators, it is enough to prove existence of some $\xi$ (independent of $u$) that can be written in terms of some $F$ (possibly depending on $u$), with $\Delta_\alpha F \le 0$ (i.e., super-harmonic) with strict inequality on a set of positive $\p^u$ (or equivalently $\p$) measure. In case of shifted Brownian motion, we provide the following

\begin{ex}\label{ex:stein-bm}
Let $F$ be a r.v.\ of the form $F=\phi\big(X_{t_1}, X_{t_2}-X_{t_1}, \ldots, X_{t_n}-X_{t_{n-1}})$, for some $0 =t_0 < t_1 < \ldots < t_n \le T$ (with $\phi: \R^n \to \R^n$ sufficiently regular, in order to perform all the computations below). Then, by \eqref{eq:chain-rule-delta-alpha}, we can express $\Delta_\alpha F$ in terms of $\nabla \phi$, $\nabla^2 \phi$, $\Delta_\alpha (\delta_i X)$ and 
\[ \Gamma_\alpha( \delta_i X, \delta_j X ) = \int_{[0,T]^2} \tilde{D}_{s,t} \delta_i X \tilde{D}_{s,t} \delta_j X \mu_\alpha(ds,dt), \quad \text{for $i$, $j \in \{1, \ldots, n\}$,}\]
with the notation $\delta_i X = X_{t_{i}} -X_{t_{i-1}}$.

Before we proceed further, we have to take into account that, with different probabilities $\p^u$, the r.v.'s may have different derivatives $D F= D^u F$ and Laplacians $\Delta_\alpha F = \Delta^u_\alpha F$, since the calculus w.r.t.\ $\p^u$ is ``modelled'' on the process $X^u= X-u$, thus, for $h \in L^2(0,T)$, $t \in [0,T]$, it holds
\[ D_h X_t = D_h X_t^u + D_h u_t = \int_0^t h(s) ds + D_h u_t\]
and
\[\Delta_\alpha X_t = \Delta_\alpha X_t^u + \Delta_\alpha u_t = \Delta_\alpha u_t,\]
provided that $u_t$ is sufficiently regular. To proceed further with computations, we assume that the process $u$ is deterministic i.e.\ we restrict the space of parameters $\Theta$ to $H^1_0$ only, so that $D_h u_t = \Delta_\alpha u_t = 0$, ruling out the problem of possible dependence upon $u$ of the Malliavin calculus that we consider.  Then, \eqref{eq:chain-rule-delta-alpha} reduces to
\[ \Delta_\alpha F= \sum_{i,j=1}^n \frac{\partial^2 \phi}{\partial x_i \partial x_j} a_{i,j}, \]
where, for $i$, $j \in \{1, \ldots, n\}$, with $t_0 = 0$, 
\[
a_{i,j}:=\int_{[0,T]^2} \int_s^t 1_{[t_{i-1}, t_{i}]}(r)dr  \int_s^t 1_{[t_{j-1}, t_{j}]}(r)dr\mu_\alpha(dt,ds).
\]

To prove that the symmetric matrix $A := (a_{ij})_{i,j=1}^n$ is well-defined and invertible, we argue as follows: for every $v = (v_i)_{i=1}^n$, it holds, using the notation $\ang{Av, v} := \sum_{i,j}^n  a_{i,j} v_i v_j$,
\begin{align*}
\ang{Av, v} & =  \int_{[0,T]^2} \sum_{i,j}^n  v_i v_j \int_s^t  1_{[t_{i-1}, t_{i}]}(r)dr  \int_s^t 1_{[t_{j-1}, t_{j}]}(r)dr\mu_\alpha(dt,ds)\\
& = \int_{[0,T]^2} \left( \int_s^t \sum_{i=1}^n v_i 1_{[t_{i-1}, t_{i}]}(r)dr \right)^2\mu_\alpha(dt,ds)\\
& = \int_{[0,T]^2} |\tilde{v}(t) - \tilde{v}(s)|^2 \mu_\alpha(dt,ds) = \|\tilde{v} \|_{W^{\alpha, 2}_0}^2,
\end{align*}
where we let $\tilde{v}(t) = \int_0^t \sum_{i=1}^n 1_{[t_{i-1}, t_{i}]}(s) v_i ds$. From this identity and \eqref{eq:walpha-h1} we deduce that $A$ is well-defined, while non-degeneracy follows from the fact that, if $\|\tilde{v} \|_{W^{\alpha, 2}_0} =0$, then $\tilde{v}$ is constant, which cannot happen except when $v = 0$.

We let $B:=(b_{i,j})_{i,j=1}^n$ be the inverse matrix of $A$, and consider the function
\[
\phi(x):=  \ang{Bx,x}^a, \quad \text{$x \in \R^n$,}
\]
for a suitable choice of $a \in \R$. Then, by formally applying the the chain rule in $\R^n$, it holds 
\[
\sum_{i,j}^n \frac{\partial^2 \phi}{\partial x^i \partial x^j} a_{i,j} = 2 a ( 2(a-1)+n)\ang{Bx,x }^{a-1},
\]
which suggests the choice $a\in(1-n/2, 0)$ (and $n \ge 3$). However, for $a$ in this range, $\phi$ is not $C^2_b(\R^n)$ and in order to rigorously conclude super-efficiency for an estimator in the form $X_t+\tilde{D}_{0,t} \log F^2$, $t \in [0,T]$, we have to justify all the applications of the chain rule above. Indeed, the only 
 non-trivial step is to prove the following estimate, for every $u \in H^1_0$:
\[ \E^u\left[ \ang{B (\delta X) , (\delta X) }^{-1} \right] < \infty. \]
In turn, this holds true because we may pass to the joint law of $\delta X = (\delta_i X)_{i=1}^n$, which is Gaussian non-degenerate (possibly non-centred) and the integrand can then be estimated from above by some constant times the function $x \mapsto |x|^{-2}$ (here the assumption $n \ge 3$ plays a role too).

Next, to prove e.g.\ that $\log F^2 \in\mathbb{D}^{1,2}$, with
\[ D_t \log F^2 = 2 a \frac{\sum_{i,j=1}^n b_{i,j} \delta_i X  1_{ [t_{j-1}, t_{j}] } (t) }{ \ang{B (\delta X), (\delta X)} }, \quad \text{for a.e.\ $t \in [0,T]$,}\]
it is sufficient to notice that, assuming this identity true, then we could estimate, by Cauchy-Schwarz inequality,
\[ \int_0^T \E^u [ | D_t \log F^2 | ^2] dt \le  4a^2 T \operatorname{trace}(B) \E^u \left[\ang{ B (\delta X), (\delta X) }^{-1} \right]. \]
This a priori estimate entails $\log F^2 \in\mathbb{D}^{1,2}$, by suitably approximating the function $z \mapsto \log z$ with smooth functions.

Similarly, to estimate $\E [ \| \xi \|_{W^{\alpha,2}_0}^2 ]$, we apply Cauchy-Schwarz and deduce, for $s$, $t \in [0,T]$, with $s< t$,
\[ \E^u [ | \tilde{D}_{s,t} \log F^2|^2] \le 4a^2 (t-s) \operatorname{trace}(B) \E^u \left[\ang{ B (\delta X), (\delta X) }^{-1}\right],\]
which can be integrated with respect to $\mu_{\alpha}$ (recall that $\alpha \in (0,1/2)$).
\end{ex}

In conclusion, the example above shows that, in the case of deterministic shifts, i.e., $\Theta = H^1_0$, we are able to explicitly build super-efficient Stein-type estimators. Although it seems reasonable, we do not know whether this technique can be extended to stochastic shifts; it would be even more interesting to provide super-efficient \emph{adapted} estimators, see also Remark \ref{oss:adapted} above.

\begin{oss}[Stein estimators for Cox processes]\label{rem:stein-cox}
In case of Cox processes, nothing prevents us from performing similar argument using, in place of Malliavin calculus, the calculus sketched in Remark \ref{rem:calculus-cox}. The case of Poisson processes and $L^2(\mu)$-type risks is investigated in \cite{MR2486115}. However, here we currently face  a strong limitation to provide explicit examples, due to the possible dependence upon $u$ (i.e., $\lambda$) of the Malliavin calculus. Let us remark that a similar limitation is also present in \cite{MR2486115} and perhaps, at least in the one-dimensional parametric cases considered in \cite[Section 5]{MR2486115}, one might similarly provide explicit examples of super-efficient estimators also with respect to Sobolev risks, but the general, infinite-dimensional parametric problem would still be open.
\end{oss}

\printbibliography
\end{document}